\newif\ifRR\RRfalse
\newtheorem{prop}{Proposition}[section]
\newtheorem{coro}[prop]{Corollary}
\newtheorem{theo}[prop]{Theorem}
\newtheorem{defin}[prop]{Definition}
\newtheorem{lem}[prop]{Lemma}
\newtheoremstyle{etoile}{\parskip}{\parskip}{\itshape}
                       {0pt}{\bfseries\sffamily}{.}{ }{}
\theoremstyle{etoile}
\newcommand\bbox{\hfill\rule{2mm}{2mm}}
\newcommand\C{\mathbb{C}}
\newcommand\Hc{\mathcal{H}}
\newcommand\Pb{\mathbb{P}}
\newcommand\Sc{\mathcal{S}}
\newcommand\Zb{\mathbb{Z}}
\def\DD{\displaystyle}
\def\egaldef{\stackrel{\mbox{\tiny def}}{=}}
\begin{document}
\ifRR
\RRdate{Avril 2010}
\RRauthor{Guy Fayolle\thanks{INRIA Paris-Rocquencourt, Domaine de Voluceau, BP 105, 78153 Le Chesnay Cedex, France. {\tt Guy.Fayolle@inria.fr}}    \and
        Kilian Raschel \thanks{ Laboratoire de Probabilit\'es et
        Mod\`eles Al\'eatoires, Universit\'e Pierre et Marie Curie,
        4 Place Jussieu, 75252 Paris Cedex 05, France.
        \texttt{kilian.raschel@upmc.fr}}}

\RRtitle{Sur l'holonomie ou l'algébricité de fonctions génératrices comptant des marches aléatoires dans le quart de plan}
\RRetitle{On the Holonomy or Algebraicity of  Generating Functions Counting Lattice Walks in the Quarter-Plane}
\titlehead{On the Holonomy or Algebraicity of  Counting Generating Functions}
\RRresume{Dans deux articles récents \cite{BMM,BK}, il a été montré que les fonctions génératrices de comptage (CGF) pour 23 marches avec pas unité, évoluant dans le quart de plan et associées a un groupe fini de transformations bi-rationnelles, sont holonomes et même algébriques dans 4 cas -- notamment celui de la marche dite de Gessel. Or il s'avère que le type d'équations fonctionnelles vérifiées par ces CGF est apparu  il y a environ 40 ans, dans le contexte probabiliste des marches aléatoires. Une méthode de résolution fut proposée dans  \cite{FIM}, basée à la fois sur des techniques algébriques et sur la réduction à des problèmes aux limites. Récemment cette méthode a été développée dans un cadre combinatoire dans  \cite{Ra}, où une étude fouillée  des formes explicites des CGF a été menée. L'objet du présent article est de retrouver la nature des CGF bivariées par une application directe de théorèmes donnés dans  \cite{FIM}.}

\RRabstract{In two recent works \cite{BMM,BK}, it has been shown that the counting generating functions (CGF) for the 23 walks with small steps confined in a quarter-plane and associated with a finite group of birational transformations are holonomic, and even algebraic in 4 cases -- in particular for the so-called Gessel's walk. It turns out that the type of functional equations satisfied by these CGF appeared in a probabilistic context almost 40 years ago. Then a method of resolution was proposed in \cite{FIM}, involving at once algebraic tools and a reduction to boundary value problems. Recently this method has been developed in a combinatorics framework in \cite{Ra}, where a thorough study of the explicit expressions for the CGF is proposed. The aim of this paper is to derive the nature of the bivariate CGF by a direct use of some general theorems given in  \cite{FIM}.}

\RRmotcle{Algébrique, holonome, fonction génératrice, marche discrète homogène par morceaux, quart de plan, couverture universelle, fonction élliptique de Weierstrass.}

\RRkeyword{Algebraic, holonomic, generating function, piecewise homogeneous lattice walk, quarter-plane, universal covering, Weierstrass elliptic function.}

\RRprojet{Imara}

\RRtheme{\THNum}
\RRNo{7242}
\URRocq
\makeRR
\else
\pagestyle{empty}

\title{On the Holonomy or Algebraicity of  Generating Functions Counting Lattice Walks in the Quarter-Plane}
\author{Guy Fayolle\thanks{INRIA Paris-Rocquencourt, Domaine de Voluceau, BP 105, 78153 Le Chesnay Cedex, France. {\tt Guy.Fayolle@inria.fr}}    \and
        Kilian Raschel \thanks{ Laboratoire de Probabilit\'es et
        Mod\`eles Al\'eatoires, Universit\'e Pierre et Marie Curie,
        4 Place Jussieu, 75252 Paris Cedex 05, France.
        \texttt{kilian.raschel@upmc.fr}}}

\maketitle

\begin{abstract}
In two recent works \cite{BMM,BK}, it has been shown that the counting generating functions (CGF) for the 23 walks with small steps confined in a quarter-plane and associated with a finite group of birational transformations are holonomic, and even algebraic in 4 cases -- in particular for the so-called Gessel's walk. It turns out that the type of functional equations satisfied by these CGF appeared in a probabilistic context almost 40 years ago. Then a method of resolution was proposed in \cite{FIM}, involving at once algebraic technics and reduction to boundary value problems. Recently this method has been developed in a combinatorics framework in \cite{Ra}, where a thorough study of the explicit expressions for the CGF is proposed. The aim of this paper is to derive the nature of the bivariate CGF by a direct use of some general theorems given in  \cite{FIM}.
\end{abstract}

\keywords{Generating function, piecewise homogeneous lattice walk, quarter-plane, universal covering, Weierstrass elliptic functions, automorphism.}

\emph{AMS $2000$ Subject Classification: primary 05A15; secondary 30F10, 30D05.}
\newpage
\fi
\section{Introduction} \label{Introduction}
The enumeration of planar lattice walks is a classical topic in combinatorics. For 
a given set $\Sc$ of allowed jumps (or steps), it is a matter of counting the number of paths starting from some point and ending at some arbitrary point in a given time, and possibly restricted to some regions of the plane. A first basic and natural question arises: how many such paths exist? A second question concerns the nature of the associated counting generating functions (CGF): are they rational, algebraic, holonomic (or D-finite, i.e.\ solution of a linear differential equation with polynomial coefficients)?

For instance, if no restriction is made on the paths, it is well-known that the CGF are rational and easy to obtain explicitly. As an other example, if the walks are supposed to remain in a half-plane, then the CGF can also be computed and turn out to be algebraic in this case, see e.g.~\cite{MBM2}.

Next it is quite natural to consider walks evolving in a domain formed by the intersection of two half-planes, for instance the positive quarter-plane $\Zb_{+}^{2}$. In this case, the problems become more intricate and multifarious results appeared: indeed, some 
walks admit of an algebraic generating function, see e.g.\ \cite{FL1} and \cite{Gessel} 
for the walk with step set $\Sc=\{(-1,0),(1,1),(0,-1)\}$ and starting from $(0,0)$, 
whereas some others admit a CGF which is even not holonomic, see e.g.\ \cite{MBM2} for the walk with $\Sc=\{(-1,2),(2,-1)\}$ and starting from the point $(1,1)$. 

In this framework, M.\ Bousquet-M\'{e}lou and M.\ Mishna have initiated in \cite{BMM} a systematic study of the nature of the walks confined to $\Zb_{+}^{2}$, starting from the origin and having steps of size $1$, which means that $\Sc$ is included in the set $\{(i,j) : |i|,|j|\leq 1\}\setminus \{(0,0)\}$. Examples of such walks are shown in figures~\ref{Fig_hol} and~\ref{Fig_alg} below.

A priori, there are $2^{8}$ such models. In fact, after eliminating trivial  cases and models equivalent to walks confined to a half-plane, and noting also that some models are obtained from others by symmetry, it is shown  in \cite{BMM} that one is left with  $79$ 
inherently different problems to analyze.

A common starting point to study these $79$ walks relies on the following analytic approach. Let $q(i,j,k)$ denote the number of paths in $\Zb_{+}^{2}$ starting from $(0,0)$ 
and ending at $(i,j)$ at time $k$. Then the corresponding CGF 
     \begin{equation}  \label{CGF}
          F(x,y,z)=\sum_{i,j,k\geq 0}f(i,j,k)x^{i}y^{j}z^{k}
     \end{equation}
satisfies the functional equation obtained in~\cite{BMM}
     \begin{equation}  \label{FE}
          K(x,y)F(x,y,z) =  c(x)F(x,0,z)+\widetilde{c}(y)F(0,y,z) + c_0(x,y),
     \end{equation}
where 
\begin{equation*}
\begin{cases}
\DD K(x,y)  =  xy\Bigg[\sum_{(i,j)\in\Sc} x^iy^j-1/z\Bigg] , \\[4ex]
\DD c(x)  = \sum_{(i,-1)\in\Sc} x^{i+1}  ,\\[3ex]
\DD \widetilde{c}(y)=\sum_{(-1,j) \in\Sc} y^{j+1}, \\[0.2cm]
\DD c_0(x,y) = -\delta F(0,0,z)-x y/z ,
\end{cases} 
\end{equation*}
with $\delta=1$ if $(-1,-1)\in\Sc$ [i.e.\ a south-west jump exists], $\delta=0$ otherwise.

For $z=1/|\Sc|$, (\ref{FE}) plainly belongs to the generic class of functional 
equations (arising in the probabilistic context of random walks) studied and solved 
in the book~\cite{FIM}, see section \ref{recall-A} of our appendix.
For general values of $z$, the analysis of (\ref{FE}) for the 79 above-mentioned walks has been  carried out in \cite{Ra}, where the integrand of the integral representations is studied in detail, via a complete  characterization of ad hoc conformal gluing functions.

It turns out that one of the basic tools to solve equation~(\ref{FE}) is to consider the group exhaustively studied in~\cite{FIM}, and originally proposed in \cite{MAL} where it was called the \emph{group of the random walk}. Starting with this approach, the authors  of \cite{BMM} consider the group of birational transformations leaving invariant the generating function $\sum_{(i,j)\in\Sc}
x^{i}y^{j}$, which is precisely the group $W=\langle \xi,\eta\rangle$ generated by
     \begin{equation*} 
          \xi(x,y)= \Bigg(x,\frac{1}{y}\frac{\sum_{(i,-1)\in\Sc}x^{i}}
          {\sum_{(i,+1)\in\mathcal{S}}x^{i}}\Bigg),\qquad
          \eta(x,y)=\Bigg(\frac{1}{x}\frac{\sum_{(-1,j)\in\mathcal{S}}y^{j}}
          {\sum_{(+1,j)\in\mathcal{S}}y^{j}},y\Bigg).
     \end{equation*}
Clearly $\xi\circ\xi=\eta\circ\eta=\text{id}$, and $W$ is a dihedral group of even order 
 larger than 4. In \cite{BMM} this order is calculated for each of the above-mentioned 79 
cases: 56 walks admit an infinite group, while the group of the 23 remaining ones is finite.

It is also proved in~\cite{BMM} that among these 23 walks,  $W$ has order 
4 for 16 walks (the ones with a step set having a vertical symmetry), $W$ has order 
6 for 5 walks (the 2 at the left in figure~\ref{Fig_hol} and the 3 ones at the left in 
figure~\ref{Fig_alg}), and $W$ has order 8 for the 2 walks on the right in
figures~\ref{Fig_hol} and~\ref{Fig_alg}. Moreover, for these 23 walks, the answers to both main questions (explicit expression \emph{and} nature of the CGF~(\ref{CGF})) are 
known. In particular, the following results exist.

\begin{theo}[\cite{BMM}]\label{22_BMM}
For the 16 walks with a group of order 4 and for the 3 walks in figure~\ref{Fig_hol}, the 
formal trivariate series~(\ref{CGF}) is holonomic non-algebraic. For the 
3 walks on the left in figure~\ref{Fig_alg}, the trivariate series~(\ref{CGF}) is algebraic. 
\end{theo}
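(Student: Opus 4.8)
The plan is to exploit the finiteness of the group $W$ through an orbit-sum applied to the functional equation~(\ref{FE}). Writing $S(x,y)=\sum_{(i,j)\in\Sc}x^iy^j$, recall that $S$, and hence $K(x,y)/(xy)=S(x,y)-1/z$, is left invariant by $W=\langle\xi,\eta\rangle$. Since $W$ is dihedral and generated by the two involutions $\xi,\eta$, it carries the sign character $\varepsilon\colon W\to\{\pm1\}$ determined by $\varepsilon(\xi)=\varepsilon(\eta)=-1$, with $\sum_{g\in W}\varepsilon(g)=0$. First I would rewrite~(\ref{FE}) as $\bigl(S(x,y)-1/z\bigr)\,xyF(x,y,z)=c(x)F(x,0,z)+\widetilde c(y)F(0,y,z)-\delta F(0,0,z)-xy/z$, then apply $\varepsilon(g)\,g$ and sum over $g\in W$. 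The crucial observation is that $c(x)F(x,0,z)$ depends on $x$ only and is therefore $\xi$-invariant, while $\widetilde c(y)F(0,y,z)$ is $\eta$-invariant and $\delta F(0,0,z)$ is constant; pairing $g$ with $g\xi$, resp.\ with $g\eta$, each such term is annihilated by the alternating sum. Using the invariance of $S-1/z$ on the left-hand side, this yields the orbit-sum identity
\begin{equation*}
\sum_{g\in W}\varepsilon(g)\,g\bigl(xyF(x,y,z)\bigr)=\frac{-1}{z\bigl(S(x,y)-1/z\bigr)}\sum_{g\in W}\varepsilon(g)\,g(xy),
\end{equation*}
whose right-hand side is an explicit rational function of $x,y,z$.

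Next I would split the finite-group walks according to whether the orbit sum $O(x,y)\egaldef\sum_{g\in W}\varepsilon(g)\,g(xy)$ vanishes identically. For the $16$ walks with $|W|=4$ and the three walks of figure~\ref{Fig_hol}, one checks $O\not\equiv0$. Since $xyF$ has support in $x^{\geq1}y^{\geq1}$, while for every $g\neq\mathrm{id}$ the support of $g(xyF)$ avoids that quadrant, extracting the part supported on positive powers of both variables isolates the wanted series:
\begin{equation*}
xyF(x,y,z)=[x^{>0}y^{>0}]\left(\frac{-O(x,y)}{z\bigl(S(x,y)-1/z\bigr)}\right).
\end{equation*}
A positive-part (diagonal-type) extraction of a rational, hence holonomic, function is again holonomic by the standard closure theorems (Lipshitz), so $F(x,y,z)$ is holonomic for these $19$ walks.

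For the three walks on the left of figure~\ref{Fig_alg} (with $|W|=6$) the orbit sum is identically zero, so the identity above degenerates to $0=0$ and a finer argument is required. Here I would return to the unreduced equation~(\ref{FE}) and use the kernel method: let $Y_0(x,z)$ be the algebraic branch of $K(x,y)=0$ solved in $y$; substituting $y=Y_0$ annihilates the left-hand side and produces an algebraic relation linking the one-variable unknowns $F(x,0,z)$ and $F(0,Y_0,z)$. Combining the relations obtained along a non-vanishing half-orbit of $W$ --- equivalently, exhibiting a rational invariant in the spirit of Tutte --- one solves for the boundary series $F(x,0,z)$ and $F(0,y,z)$ as algebraic functions, after which~(\ref{FE}) forces $F(x,y,z)$ itself to be algebraic. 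These three step sets are precisely the (reverse and doubled) Kreweras models, for which algebraicity is classical.

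Finally, the \emph{non}-algebraicity of the $19$ holonomic walks does not follow formally from the diagonal representation and must be established by singularity analysis: extracting from the explicit expression the exponential growth rate and the critical exponent $\alpha$ of, say, the excursion numbers $f(0,0,k)$, one finds an exponent governed by a quantity of the form $\pi/\arccos(\cdot)$ that is irrational, which is incompatible with an algebraic singularity. The two genuine obstacles are thus (i) the algebraic cases, where the orbit sum collapses and one must locate the correct substitution or invariant and integrate it in closed form, and (ii) this last asymptotic step, which requires controlling the coefficients rather than merely manipulating the functional equation.
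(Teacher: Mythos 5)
Your proposal is, in substance, a reconstruction of the original Bousquet-M\'elou--Mishna argument (alternating orbit sums, positive-part extraction, half-orbit sums when the orbit sum collapses), which is \emph{not} the route this paper takes: here one writes (\ref{FE}) on the curve $K(x,y)=0$, checks $N(f)=1$ (lemma~\ref{NORM}), and invokes the \cite{FIM} machinery (theorem~\ref{main_theorem_FIM}, corollary~\ref{Coro1}, theorem~\ref{Bivariate}), which says that for a finite group of order $2n$ the solution is automatically holonomic and is algebraic \emph{if and only if} the explicit rational sum (\ref{CNS}) vanishes on $K(x,y)=0$. The whole theorem then reduces to elementary rational computations, carried out case by case in section~\ref{Nature} (e.g.\ the factorization (\ref{group_four_orbit}) for the 16 vertically symmetric walks). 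Your orbit-sum identity is correct --- and is in fact closely related to (\ref{CNS}): for Gessel's walk the paper observes that the sum in (\ref{CNS}) is exactly $-1/z$ times the orbit sum of $xy$ --- and your positive-part/Lipshitz step does give holonomy of the trivariate series for the 19 walks with non-vanishing orbit sum, granting the model-by-model verification that no $g\neq\mathrm{id}$ contributes to the $x^{>0}y^{>0}$ part. The sketch for the three algebraic walks of figure~\ref{Fig_alg} is also a viable (if schematic) rendition of the half-orbit-sum argument.

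The genuine gap is non-algebraicity, which is half of the statement. You leave it as an acknowledged ``obstacle,'' and the specific mechanism you propose would fail: exponents of the form $\pi/\arccos(\cdot)$ are irrational precisely in the \emph{infinite}-group cases (that is how non-holonomy is proved there), whereas for all 23 finite-group walks this quantity is rational --- finiteness of the group essentially forces it. For instance the excursion numbers of the simple walk behave like $C\,4^{k}k^{-3}$: the exponent is a negative integer, not an irrational number. A correct asymptotic argument must therefore rely on rational exponents of a type forbidden for coefficients of algebraic functions (Flajolet's negative-integer criterion), and this has to be controlled for each of the 19 models, including the order-6 and order-8 walks of figure~\ref{Fig_hol}. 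The paper avoids asymptotics altogether: since corollary~\ref{Coro1} is an ``if and only if,'' showing that (\ref{CNS}) fails on $\{(x,y)\in\C^{2}: K(x,y)=0\}$ for any $z$ immediately yields non-algebraicity of the bivariate function. That converse direction --- resting on the non-algebraicity of $\widetilde{\Phi}$ in the elliptic uniformization --- is exactly what your orbit-sum approach lacks and what the \cite{FIM} representation buys.
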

The proof of this theorem relies on skillful algebraic manipulations together with the calculation of adequate \emph{orbit} and \emph{half-orbit} sums.

\begin{theo}[\cite{BK}]\label{1_BK}
For the so-called \emph{Gessel's walk} on the right in figure~\ref{Fig_alg}, 
the formal trivariate series~(\ref{CGF}) is algebraic. 
\end{theo}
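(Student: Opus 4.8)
The plan is to recast the problem into the analytic framework of \cite{FIM} and then read off the nature of the solution from the general nature theorems recalled in the appendix. First I would specialize the functional equation~(\ref{FE}) to Gessel's step set $\Sc=\{(\pm1,0),(1,1),(-1,-1)\}$, for which the kernel reads $K(x,y)=x^2y^2+x^2y+y+1-xy/z$ and the two involutions generating $W$ take the explicit form $\xi(x,y)=(x,1/(x^2y))$ and $\eta(x,y)=(1/(xy),y)$, with $\eta\xi$ of order $4$ so that $|W|=8$. Viewed as a quadratic in $y$, the kernel is $x^2y^2+(x^2-x/z+1)y+1$, whose $y$-discriminant is a palindromic quartic in $x$ with distinct roots for $z$ in the relevant range; hence the affine curve $\{K(x,y)=0\}$ has genus $1$, and I would uniformize it by Weierstrass functions, $x=x(\omega)$, $y=y(\omega)$, with $\omega$ running over $\C/(\omega_1\Zb+\omega_2\Zb)$.

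Under this uniformization the Galois involutions lift to $\omega\mapsto-\omega$ and $\omega\mapsto\omega_3-\omega$, so that $\eta\xi$ becomes the pure translation $\omega\mapsto\omega+\omega_3$; the finiteness of $W$ is exactly the commensurability of $\omega_3$ with $\omega_2$, and the order being $8$ forces the quarter-period relation $4\omega_3\in\omega_1\Zb+\omega_2\Zb$. Lifting the unknown boundary sections $r(\omega)=F(x(\omega),0,z)$ and $s(\omega)=F(0,y(\omega),z)$ to the $\omega$-plane, equation~(\ref{FE}) becomes, on the curve $K=0$, a relation between $r$, $s$ and their translates, which I would convert into a boundary value problem of Riemann--Hilbert type: $r$ and $s$ are sought meromorphic in prescribed domains and glued along the images of the unit circles by a conformal gluing function $w$. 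Solving this problem as in \cite{FIM} yields an explicit Cauchy-type, elliptic representation of $r$ and $s$.

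The nature of the answer is then decided by the general theorems of \cite{FIM}: the solution is \emph{algebraic} exactly when the gluing function $w$, equivalently the uniformizing object carrying the solution, is an algebraic function of $x$, and \emph{holonomic but transcendental} otherwise. Concretely, the solution telescoped over the finite orbit is a combination of elliptic objects on the $\omega$-plane; it descends to an algebraic function of $x$ precisely when it is genuinely doubly periodic with a lattice commensurable with the one governing $x(\omega)$, i.e.\ when no Weierstrass $\zeta$-type quasi-periodic term survives. I would show that the quarter-period value of $\omega_3/\omega_2$ attached to the order-$8$ Gessel data forces exactly this collapse, so that $r$ and $s$, hence $F(x,0,z)$ and $F(0,y,z)$, are algebraic in $(x,z)$ and $(y,z)$. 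Dividing~(\ref{FE}) by $K(x,y)$ then expresses $F(x,y,z)$ as a rational combination of these algebraic boundary sections, whence the full trivariate series~(\ref{CGF}) is algebraic.

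The main obstacle is precisely this last step, namely separating \emph{algebraic} from merely \emph{holonomic}. Holonomy is comparatively cheap: the elliptic/integral representation already makes $r$ and $s$ D-finite. The genuinely hard part is to prove that for Gessel's walk the quasi-periodic ($\zeta$-type) contributions cancel and the surviving function is algebraic over $\C(x)$, and to establish this \emph{uniformly in the parameter $z$} --- since the period lattice, the branch points, and the gluing function all vary with $z$, and one must control the possible degenerations of the genus-$1$ curve at exceptional values of $z$. This is where the specific order-$8$ arithmetic of $W$, and not mere finiteness, must be exploited, and it is exactly what distinguishes Gessel's walk from the order-$6$ holonomic-non-algebraic cases of figure~\ref{Fig_hol}.
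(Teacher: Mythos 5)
Your setup (kernel, explicit $\xi,\eta$, genus-1 curve, uniformization in which $\eta\xi$ becomes $\omega\mapsto\omega+\omega_3$ with $\omega_3$ commensurable to a period) is exactly the \cite{FIM} machinery the paper uses, and your computations of $K$, $\xi$, $\eta$ for Gessel's step set are correct. But the proof collapses at precisely the step you flag as the hard one, and for a structural reason. You claim the cancellation of the Weierstrass $\zeta$-type (non-algebraic) contributions is forced by ``the specific order-$8$ arithmetic of $W$'', i.e.\ by the period relation coming from $\textit{Order}(W)=8$, and that this is what distinguishes Gessel's walk from the non-algebraic cases. That mechanism provably cannot work: the rightmost walk of figure~\ref{Fig_hol} \emph{also} has a group of order $8$, yet its CGF is holonomic non-algebraic, while the three left walks of figure~\ref{Fig_alg} have groups of order $6$ and are algebraic. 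Group order (hence the arithmetic of $\omega_3/\omega_2$) does not decide algebraicity. Your backup criterion --- ``algebraic exactly when the conformal gluing function is algebraic in $x$'' --- is also not the right dichotomy: the gluing function is an object attached to the kernel and the finiteness of the group, and such a criterion would not separate the $4$ algebraic walks from the other $19$ finite-group walks.

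What actually decides algebraicity (chapter 4 of \cite{FIM}, recalled as Corollary~\ref{Coro1} and Theorem~\ref{Bivariate}) is a condition on the \emph{inhomogeneous data} of the functional equation: the solution decomposes as $w_1+w_2+w/r$ with $w_1,r$ rational and $w$ algebraic, while $w_2$ is the non-algebraic function $\widetilde{\Phi}$ multiplied by the sum $\sum_{k=0}^{n-1}\psi_{\delta^k}/\prod_{i=1}^{k}f_{\delta^i}$; algebraicity holds iff this sum vanishes on $K(x,y)=0$. For Gessel's walk one has $c=1$, hence $f=1$ and $\psi=[(xy)_\eta-xy]/z$, and --- after checking that $\Hc$ itself has order $8$, i.e.\ $n=4$ (a point you skip: a priori only $\textit{Order}(\Hc)\le\textit{Order}(W)$, so orders $4$ and $6$ must be excluded via lemmas 4.1.1--4.1.2 of \cite{FIM}) --- the sum collapses to $-\frac{1}{z}\sum_{\rho\in\Hc}\mathrm{sign}(\rho)\,(xy)_{\rho}$. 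Its vanishing is a specific algebraic identity of Gessel's step set (remarked in \cite{BMM}), not a consequence of any period relation, and the symmetric condition ($\widetilde{\ref{CNS}}$) must be verified separately since Gessel's step set has no $x$--$y$ symmetry. Your proposal never isolates this checkable condition, so the ``genuinely hard part'' is left resting on a false principle. A secondary caveat: like the paper, this route at best yields algebraicity of the bivariate function $(x,y)\mapsto F(x,y,z)$ with $z$ a fixed parameter; upgrading to the trivariate statement of the theorem needs a further argument that your appeal to ``uniformity in $z$'' only names, but does not supply.
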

The proof given in \cite{BK} involves a powerful computer algebra system (Magma), allowing to carry out dense calculations.

The main goal of our  paper will be to present another proof of theorems~\ref{22_BMM} and \ref{1_BK} for the bivariate generating function $(x,y)\mapsto F(x,y,z)$, by means of the general and powerful approach proposed in the book~\cite{FIM}. In particular,  we are going to show how chapter 4 of this book (which deals with walks having a finite group) yields rather directly the nature of the bivariate generating functions
(\ref{CGF}) for the 23 walks associated with a finite group. 

The rest of the paper has a simple organization. Indeed, section~\ref{Recall} recalls
the results of~\cite{FIM}, used in Section~\ref{Nature}, where we find the nature of the counting generating functions coming in~(\ref{FE}).

\begin{figure}[!ht]
\begin{center}
\begin{picture}(340.00,65.00)
\includegraphics{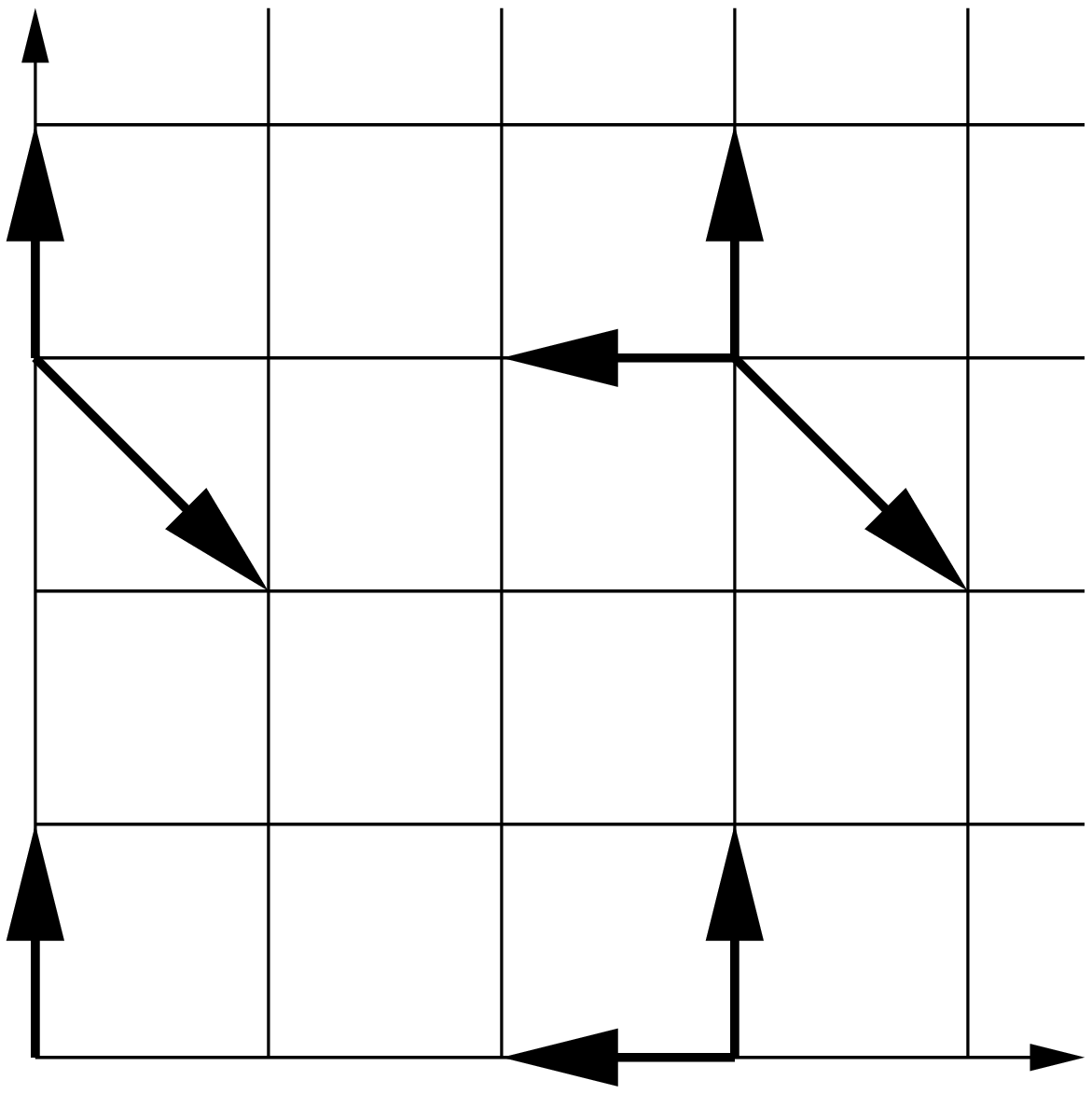}
\hspace{45mm}
\includegraphics{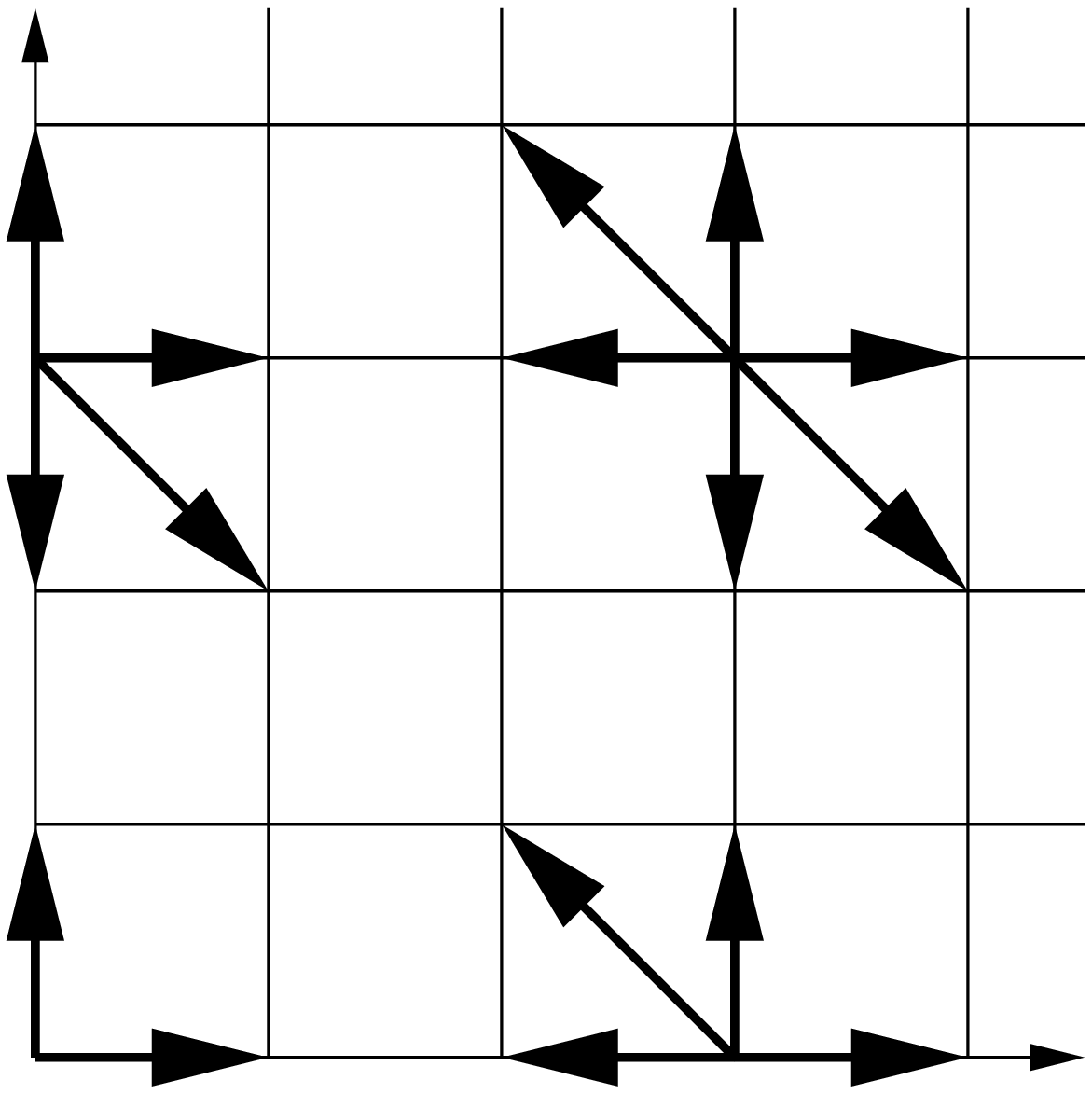}
\hspace{45mm}
\includegraphics{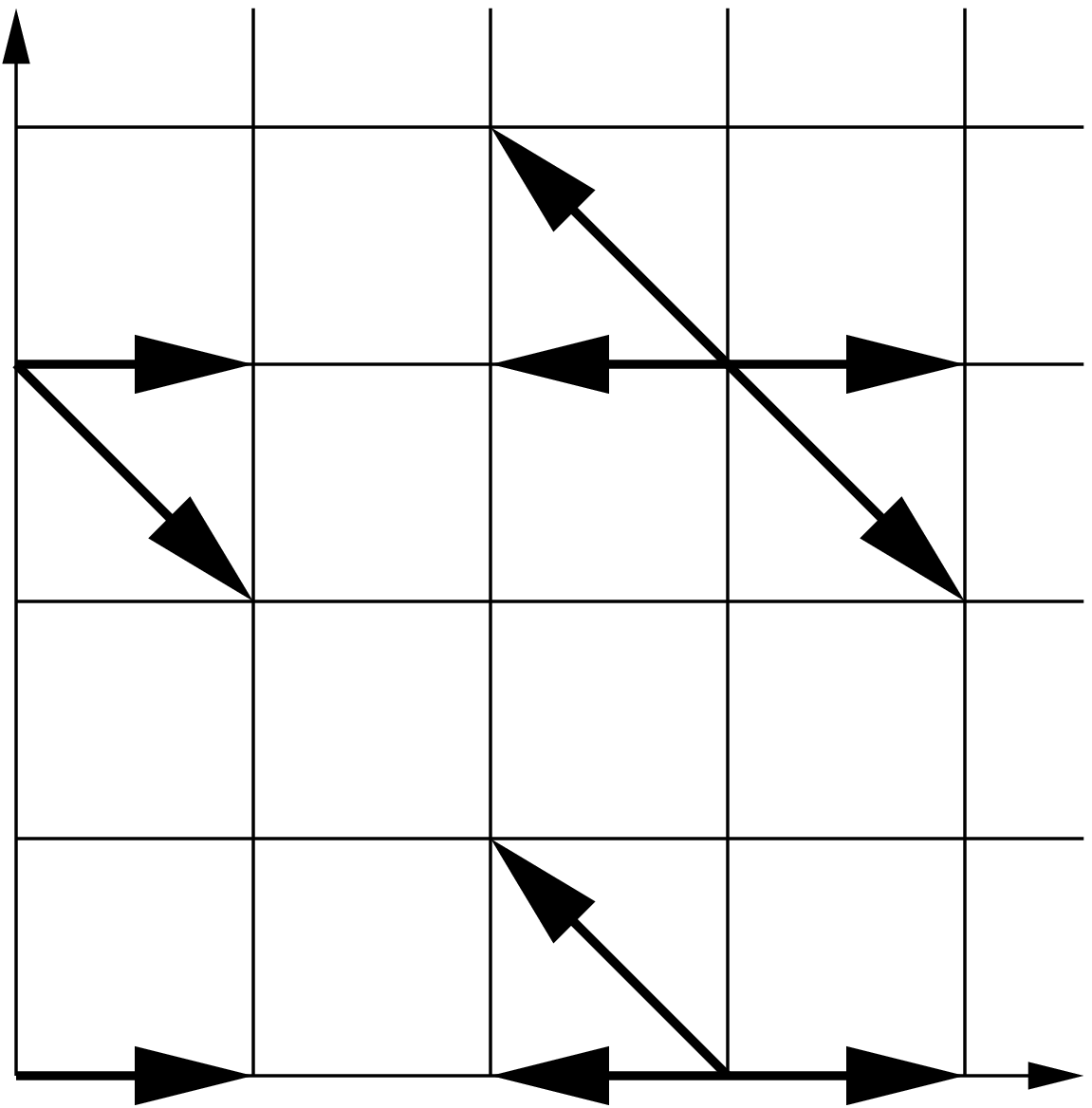}
\end{picture}
\end{center}
\vspace{-3mm}
\caption{On the left,  2 walks with a group of order 6.
         On the right, 1 walk with a group of order 8}
\label{Fig_hol}
\end{figure}

\begin{figure}[!ht]
\begin{center}
\begin{picture}(390.00,70.00)
\includegraphics{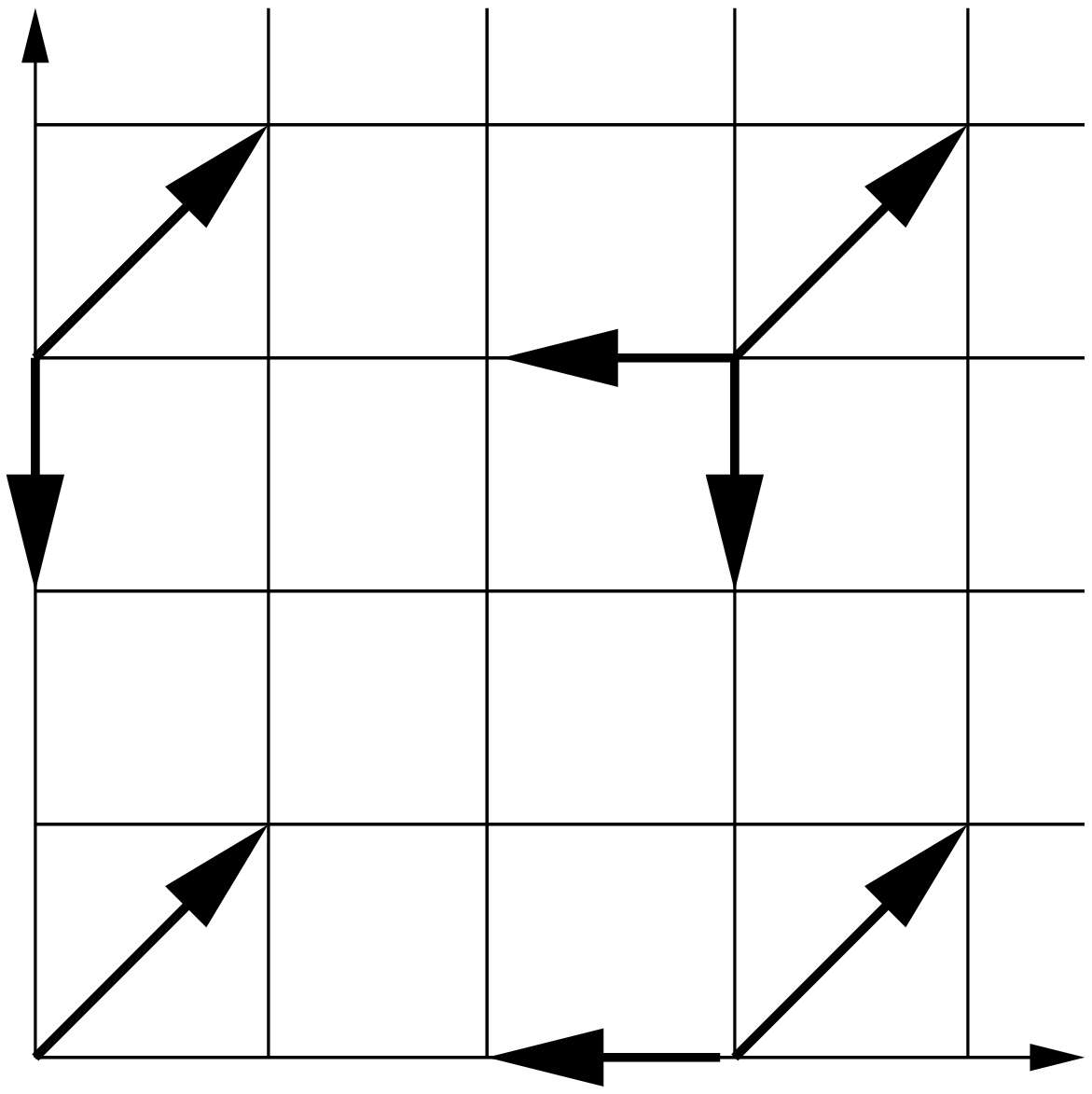}
\hspace{35mm}
\includegraphics{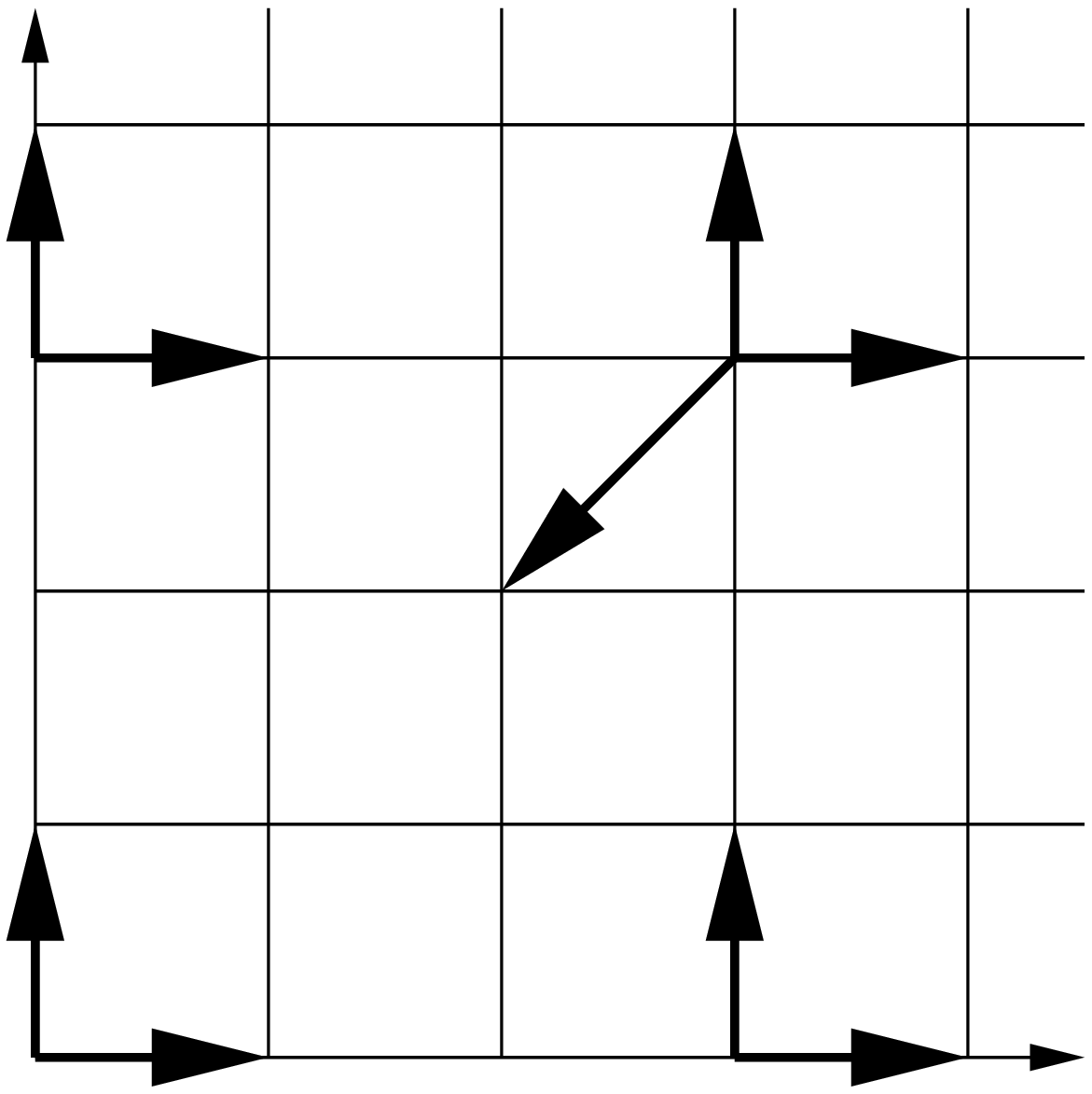}
\hspace{35mm}
\includegraphics{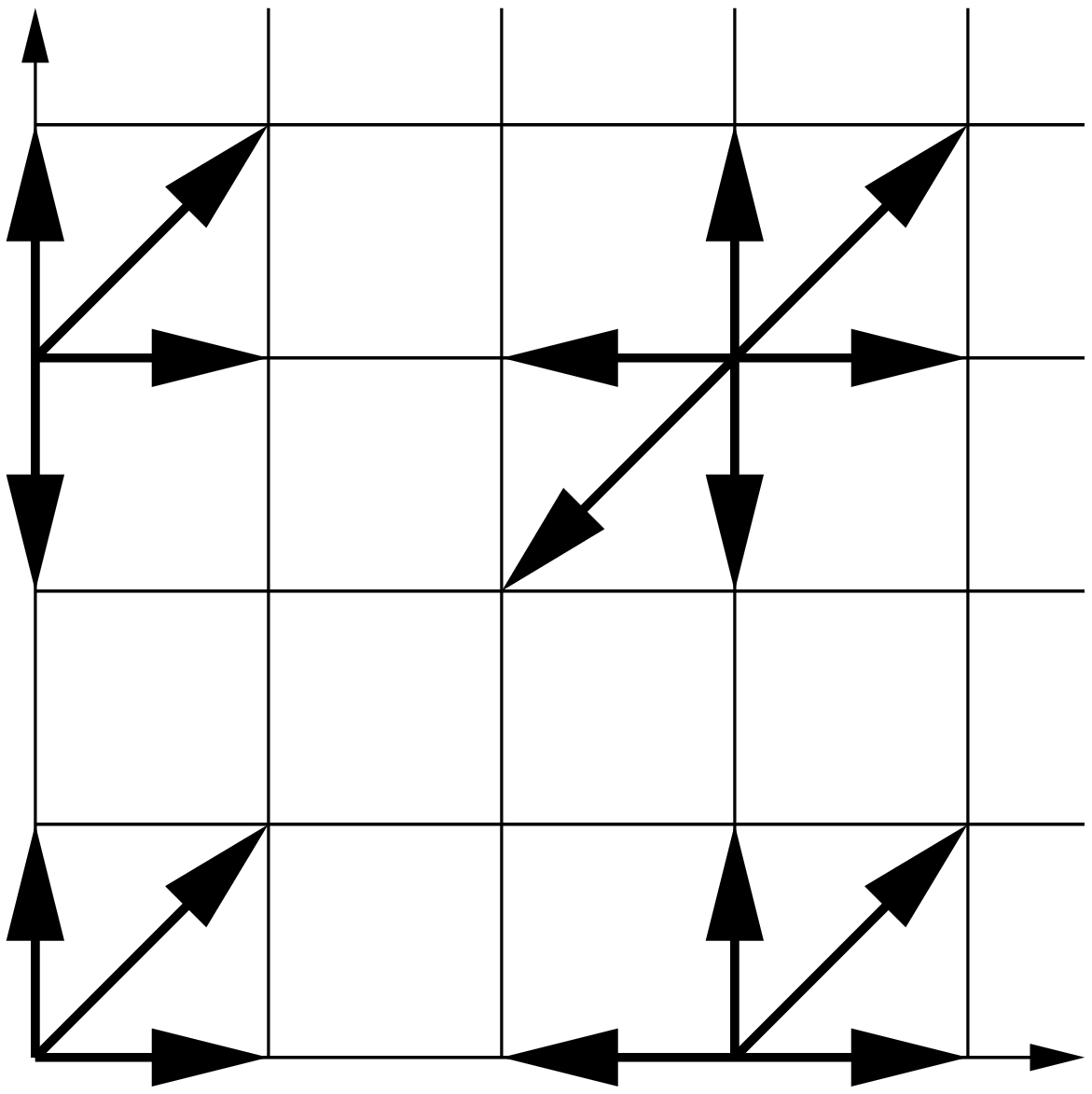}
\hspace{35mm}
\includegraphics{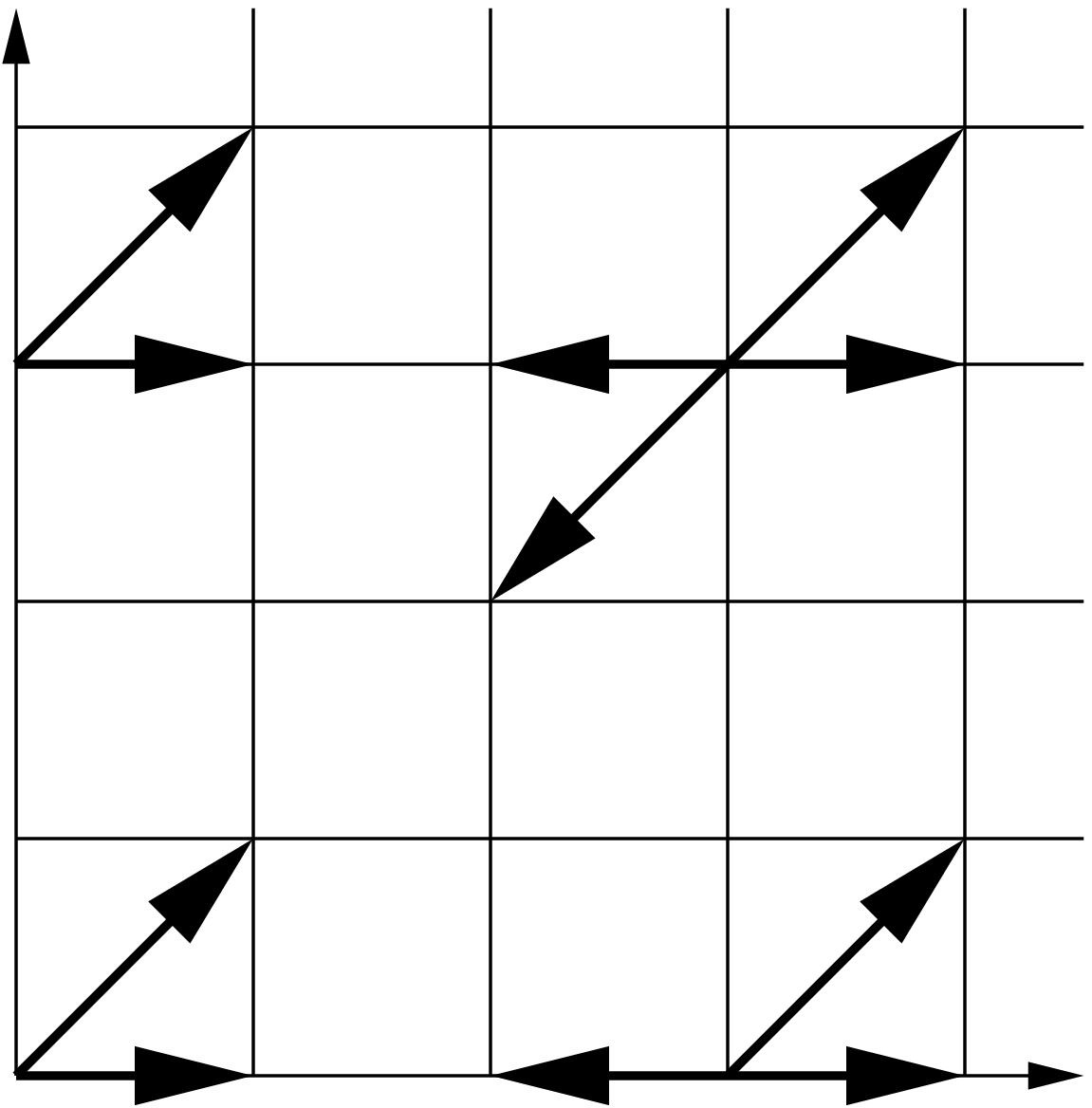}
\end{picture}
\end{center}
\vspace{-3mm}
\caption{On the left,  3 walks with a group of order 6. 
         On the right, 1 walk with a group of order 8}
\label{Fig_alg}
\end{figure}


\section{Some results of \cite{FIM} with miscellaneous extensions} \label{Recall}
In this section we use the notation of appendix \ref{recall-A}, where is presented  the generic functional equation (\ref{A1-FE}) studied in~\cite{FIM}. It clearly contains~(\ref{CGF}) as a particular case, since the coefficients $q$, $\widetilde{q}$ and $q_{0}$  occuring in~(\ref{A1-FE}) may depend on the \emph{two} variables $x,y$. Moreover, although this be not crucial, the unknown function $\pi(x)$ [resp.\ $\widetilde{\pi}(y)$] is not necessarily equal to $\pi(x,0)$ [resp.\ $\widetilde{\pi}(0,y)$]. 

Chapter~5 of~\cite{FIM} explains how to solve (\ref{A1-FE})  by reduction to  boundary value problems of Riemann-Hilbert-Carleman type, and explicit expressions for the unknown functions are obtained.

Independently, \emph{when the group of the walk is finite},  chapter~4 of~\cite{FIM} provides another approach allowing to characterize completely the solutions of (\ref{A1-FE}), and also to give \emph{necessary and sufficient conditions} for these solutions \emph{to be rational or algebraic}. We shall proceed along these lines in the sequel. 
 \subsection{The group and the genus}\label{GROUP} Let $\C\,(x,y)$ be the field of rational functions in $(x,y)$ over $\C$. Since $Q$ is assumed to be irreducible in the general case, the quotient field $\C\,(x,y)$ denoted by 
 $\C_Q(x,y)$ is also a field.
 \begin{defin}
The \emph{group of the random walk} is the Galois group  $\Hc=\langle \xi,\eta\rangle$ of automorphisms of   $\C_Q(x,y)$ generated by  $\xi$ and $\eta$ given by
     \begin{equation*}
          \xi(x,y)=\Bigg(x,\frac{1}{y}\frac{\sum_{i} p_{i,-1}x^{i}}
          {\sum_{i} p_{i,+1}x^{i}}\Bigg),\ \ \ \ \
          \eta(x,y)=\Bigg(\frac{1}{x}\frac{\sum_{j} p_{-1,j}y^{j}}
          {\sum_{j} p_{+1,j}y^{j}},y\Bigg) .
     \end{equation*}
\end{defin}
 Let
\begin{equation*}
\delta = \eta \xi .
\end{equation*}
Then $\Hc$ has a normal cyclic subgroup $\Hc_0=\{\delta^i, i\in\Zb\,\}$, which is finite or infinite, and $\Hc / \Hc_0$ is a group of order 2. 
When the group $\Hc$ is finite of order $2n$, we have $\delta^{n}=\text{id}$.
We shall write $f_\alpha=\alpha(f)$ for any automorphism $\alpha\in\Hc$ and any function 
$f\in\C_Q(x,y)$.

In \cite{BMM}, the group $W$ is introduced only in terms of two birational transformations. The difference between the two approaches is not only of a formal character. In fact $W$ is defined in all of $\C^2$, whereas $\Hc$ in  \cite{FIM} would act only on an algebraic curve of the type  
\[
\bigg\{(x,y)\in \C^2 : R(x,y) = xy \bigg[\sum_{i,j} r_{i,j} x^i y^j -1/z\bigg] =0\bigg\}.
\]
An immediate question arises: do the notions of order and finiteness coincide in the respective approaches? In general, the answer is clearly no. Indeed (adapting \cite{FIM}, lemma 4.1.1) for the group to be of order $4$ it is necessary and sufficient to have
\begin{equation*}
\left| \begin{array}{lll}
r_{1,1} & r_{1,0} & r_{1,-1} \\ r_{0,1} & r_{0,0}-1/z & r_{0,-1} \\ r_{-1,1}
& r_{-1,0} & r_{-1,-1} \end{array} \right| = 0. 
\end{equation*}
and this condition depends on $z$, while $W$ is independent of $z$. 
On the other hand, if $W$ is finite so is $\Hc$, and conversely if $\Hc$ is infinite so is $W$.
In addition, with an obvious notation,
\begin{equation}\label{HW}
\textit{Order}(\Hc)\le \textit{Order}(W).
\end{equation}
In the sequel, we shall encounter groups of order not larger than $8$.

\medskip
Besides, in \cite{FIM}, the algebraic curve is associated with a Riemann surface which  is of \emph{genus} $g=0$ (the sphere) or $g=1$ (the torus). Accordingly, the \emph{universal covering} of this surface is 
\begin{itemize}
\item the Riemann sphere $\Pb^1$ if $g=0$; 
\item the finite complex plane $\C_{\omega}$ if $g=1$.
\end{itemize}
All 23 random walks we  consider in this paper correspond indeed to $g=1$, and an efficient uniformization is briefly described in appendix \ref{recall-B}.  Moreover, they have a finite group, say of order $2n$ (this will be made precise in section \ref{Nature}).

\subsection{Algebraicity and holonomy}

For any $h \in\C_Q(x,y)$, let the \emph{norm} $N(h)$ be defined as
\begin{equation}\label{eq-norm}
N(h)  \egaldef  \prod_{i=0}^{n-1} h_{\delta^i}. 
 \end{equation}

Written on $Q(x,y)=0$, equation (\ref{A1-FE}) yields the system
\begin{equation} \label{eq.pi}
 \begin{cases}
 \pi= \pi_\xi, \\[0.1cm]
 \pi_\delta - f \pi = \psi,
\end{cases}
 \end{equation}
where 
\begin{equation*}
f= \frac{q\widetilde{q}_\eta}{\widetilde{q}q_\eta} , \quad
 \psi = \frac{q_0\widetilde{q}_\eta}{q_\eta \widetilde{q}} - \frac{(q_0)_\eta}{q_\eta}.
\end{equation*}
It turns out that the analysis of~(\ref{A1-FE}) given in \cite{FIM} deeply differs, according 
$N(f)=1$ or $N(f)\ne1$. In section \ref{Nature}, as for the above-mentioned walks, it will be shown that we are in the case  $N(f)=1$ and we need the following important theorem stated in a terse form.
\begin{theo} \label{main_theorem_FIM}\quad If $N(f) = 1$, then the general
solution of the fundamental system (\ref{eq.pi})
has the form
\begin{equation*}
\pi = w_1 + w_2 + \dfrac{w}{r} , 
\end{equation*} where 
 
 -- the function  $w_1$ and $r$ are rational;

 -- the function $w_2$ is given by
\begin{equation} \label{def_w_2}
w_2 = \dfrac{\widetilde{\Phi}}{n}\,
\sum_{k=0}^{n-1}\left(\frac{\psi_{\delta^k}}{\prod_{i=1}^k
f_{\delta^i}}\right); 
\end{equation} 

-- on the universal covering $\C_{\omega}$, we have 
\begin{equation}
\label{def_Phi} \widetilde{\Phi}\left(\omega
+\frac{\omega_2}{2}\right) \, \egaldef \, 
\frac{\omega_1}{2 i \pi} \zeta(\omega ; \omega_1, \omega_3) - \frac{\omega}
{i \pi} \zeta \left( \frac{\omega_1}{2} ; \omega_1, \omega_3\right),
\end{equation} 
where $\zeta(\omega; \omega_1, \omega_3) \egaldef \zeta_{1,3}(\omega)$ stands for the classical Weierstrass $\zeta$-function (see \cite{HUR}) with quasi periods $\omega_1, \omega_3$;

-- the function $w$ is algebraic and satisfies the automorphy conditions
$ w = w_\xi = w_\delta$.
\bbox
 \end{theo}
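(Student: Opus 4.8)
The plan is to transport the whole system (\ref{eq.pi}) to the universal covering $\C_\omega$ of the genus-one Riemann surface (appendix~\ref{recall-B}), on which the automorphisms linearise: $\xi$ lifts to the reflection $\omega\mapsto-\omega$ and $\delta=\eta\xi$ to the translation $\omega\mapsto\omega+\omega_2$, the finiteness of $\Hc$ of order $2n$ meaning exactly that $n\omega_2$ lies in the period lattice $\omega_1\Zb+\omega_3\Zb$, so that $\delta^n=\text{id}$. Under this lift the first relation $\pi=\pi_\xi$ becomes the parity condition $\pi(-\omega)=\pi(\omega)$, while the second becomes the inhomogeneous first-order linear difference equation
\begin{equation*}
\pi(\omega+\omega_2)-f(\omega)\pi(\omega)=\psi(\omega),
\end{equation*}
with $f$ and $\psi$ elliptic (i.e.\ rational on the surface). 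The task is thus reduced to solving this scalar difference equation along the finite cyclic $\delta$-orbit while respecting the reflection symmetry.

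First I would treat the homogeneous equation $h_\delta=fh$. Iterating it $n$ times gives $h_{\delta^n}=N(f)h$ with $N(f)$ the norm (\ref{eq-norm}); since $\delta^n=\text{id}$, single-valuedness on the surface forces precisely $N(f)=1$, which is our hypothesis, and conversely this equality lets one exhibit a concrete homogeneous solution $h_0$ (a Weierstrass $\sigma$/$\zeta$ quotient whose zeros and poles match the degree-zero elliptic divisor of $f$). Any two homogeneous solutions differ by a $\delta$-invariant factor; imposing in addition the parity condition forces that factor to be invariant under the whole group $\Hc=\langle\xi,\delta\rangle$. This is exactly the algebraic, $\Hc$-invariant freedom recorded by the term $w/r$ with $w=w_\xi=w_\delta$, $r$ rational.

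For a particular solution I would use variation of constants, setting $\pi=h_0v$, which turns the equation into the purely additive one $v_\delta-v=\psi/h_{0,\delta}$; summing telescopically around the finite orbit exhibits the obstruction to solving it within single-valued functions as a $\delta$-orbit sum, and the possibly non-zero residual is what generates the transcendental part of the answer. This is where the quasi-elliptic primitive $\widetilde\Phi$ of (\ref{def_Phi}) enters: being built from the Weierstrass $\zeta$-function it is $\omega_1$-periodic and its increment over the period $\omega_3$ (to which $n\omega_2$ reduces modulo $\omega_1\Zb$) is the constant $\pm1$, as one checks from $\zeta(\omega+\omega_j)=\zeta(\omega)+2\zeta(\omega_j/2)$ and Legendre's relation $\omega_1\zeta(\omega_3/2)-\omega_3\zeta(\omega_1/2)=\pm i\pi$. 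Thus $\widetilde\Phi$ carries exactly the unit of growth per full orbit that no elliptic function can, while its controlled behaviour under $\xi$ keeps the parity condition manageable. Multiplying it by the elliptic orbit sum $\frac1n\sum_{k=0}^{n-1}\psi_{\delta^k}/\prod_{i=1}^kf_{\delta^i}$ (itself single-valued thanks again to $N(f)=1$) produces the explicit $w_2$ of (\ref{def_w_2}). Collecting the pieces, the general solution is the sum of the transcendental particular term $w_2$, the $\Hc$-invariant homogeneous term $w/r$, and a rational term $w_1$ accounting for the passage between $\pi$ and the genuine unknowns; this is the asserted form.

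The step I expect to be the main obstacle is the precise construction and validation of $\widetilde\Phi$ and of $w_2$: one must pin down the non-constant $\delta$-increment of $\widetilde\Phi$, verify by direct substitution that $w_2$ does solve the inhomogeneous equation --- with any purely rational discrepancy absorbed into $w_1$ --- and check that $w_2$ is genuinely single-valued on the surface despite $\widetilde\Phi$ being only quasi-elliptic. This is also the exact point at which the later dichotomy algebraic versus holonomic-non-algebraic will be read off, according as the orbit sum multiplying $\widetilde\Phi$ vanishes or not.
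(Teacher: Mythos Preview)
The paper does not actually prove this theorem: it is quoted from \cite{FIM} (chapter~4) and closed immediately with a \bbox, so there is no in-paper argument to compare against. Your sketch is a reasonable outline of how the proof in \cite{FIM} goes --- lift to the universal covering, linearise the automorphisms, solve the first-order difference equation along the finite $\delta$-orbit, and use a quasi-elliptic primitive built from the Weierstrass $\zeta$-function to absorb the orbit obstruction.

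Two concrete corrections. First, you have systematically swapped $\omega_2$ and $\omega_3$: in the paper's conventions the uniformisation has period lattice $\omega_1\Zb+\omega_2\Zb$ (appendix~\ref{recall-B}), the automorphism $\delta$ lifts to the translation $\omega\mapsto\omega+\omega_3$ (not $\omega+\omega_2$), and finiteness of the group reads $n\omega_3=k\omega_2$. Correspondingly, $\widetilde\Phi$ is built from $\zeta_{1,3}$, is $\omega_1$-periodic, and has constant increment under a shift by $\omega_3$ --- exactly matching $\delta$. Second, the lift of $\xi$ is in general an affine reflection $\omega\mapsto-\omega+c$ for a specific constant $c$, not the pure negation $\omega\mapsto-\omega$; this matters when you later impose the parity condition on the homogeneous and particular pieces.

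Beyond these notational points, the part you flag yourself as the main obstacle is indeed where the real work lies: checking directly that $w_2$ as written satisfies $(w_2)_\delta-f\,w_2=\psi$ up to a rational defect (this is where the normalisation by $n$ and the product $\prod_{i=1}^k f_{\delta^i}$ come from, via the reduction through a rational homogeneous solution guaranteed by $N(f)=1$), and verifying single-valuedness on the surface. Your outline names these steps correctly but does not carry them out; for a complete argument you would need to reproduce the relevant lemmas of \cite{FIM}, chapter~4.
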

The quantities $\omega_1, \omega_2, \omega_3$ have explicit forms (see~\cite{FIM}, lemmas 3.3.2 and 3.3.3) and the automorphism $\delta$ writes simply as the translation
 \[
 \delta(\omega) = \omega +\omega_3, \quad \forall \omega \in\C_\omega.
 \]
In addition, when the group is finite of order $2n$, there exists an integer $k$ relatively prime with $n$ satisfying the relation
\[
n\omega_3 = k\omega_2,
\]
which in turn allows to prove (see \cite{FIM}, lemma 4.3.5) that $\widetilde{\Phi}(\omega)$ defined in (\ref{def_Phi}) is \emph{not} algebraic in $x(\omega)$.  We are now in a position to derive the following corollary.
     
\begin{coro} \label{Coro1}
Assume the group is finite of order $2n$ and $N(f)=1$.  Then the solution $\pi(x)$ 
of~(\ref{A1-FE}) is holonomic. Moreover it is algebraic if and only if on $Q(x,y)=0$,
 \begin{equation}\label{CNS}
\sum_{k=0}^{n-1} \frac{\psi_{\delta^k}}{\prod_{i=1}^kf_{\delta^i}} =0. 
 \end{equation}
\end{coro}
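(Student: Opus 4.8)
The plan is to read off both assertions directly from the decomposition $\pi = w_1 + w_2 + w/r$ furnished by Theorem~\ref{main_theorem_FIM}, reducing everything to the single term $w_2 = (\widetilde{\Phi}/n)\,S$, where I abbreviate $S = \sum_{k=0}^{n-1}\psi_{\delta^k}/\prod_{i=1}^{k}f_{\delta^i}$ for the sum appearing in (\ref{def_w_2}) and (\ref{CNS}). First I would dispose of the two harmless terms: since $w_1$ and $r$ are rational and $w$ is algebraic, both $w_1$ and $w/r$ are algebraic, hence in particular holonomic. Consequently the nature of $\pi$ is governed entirely by $w_2$; more precisely, $\pi$ is algebraic if and only if $w_2$ is, and $\pi$ is holonomic as soon as $w_2$ is.

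Next I would analyse the two factors of $w_2$ separately. On the one hand, $f$ and $\psi$ belong to $\C_Q(x,y)$ and $\delta$ is an automorphism of that field, so each $f_{\delta^i}$ and $\psi_{\delta^k}$ lies in $\C_Q(x,y)$, and being a finite combination of these, $S\in\C_Q(x,y)$ as well. As $\C_Q(x,y)$ is a finite (degree-two) extension of $\C(x)$, the function $S$ is algebraic over $\C(x)$, i.e.\ algebraic in $x$ on $Q(x,y)=0$. On the other hand, I would establish that $\widetilde{\Phi}$ is holonomic in $x$. Differentiating (\ref{def_Phi}) and using $\zeta'=-\wp$, the $\omega$-derivative $\widetilde{\Phi}'(\omega)$ equals a constant plus a multiple of $\wp(\,\cdot\,;\omega_1,\omega_3)$; thanks to the commensurability relation $n\omega_3=k\omega_2$, this last function and the uniformizing function $x(\omega)$ are both elliptic on a common torus, on which the field of elliptic functions is a finite extension of $\C(x)$, so $\wp(\,\cdot\,;\omega_1,\omega_3)$ is algebraic over $\C(x)$. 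Since $x'(\omega)$ is likewise elliptic, hence algebraic over $\C(x)$, the quotient $d\widetilde{\Phi}/dx=\widetilde{\Phi}'(\omega)/x'(\omega)$ is algebraic; being a primitive of an algebraic function, $\widetilde{\Phi}$ is holonomic.

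Holonomy of $\pi$ then follows from the standard closure properties of holonomic functions of one variable: $w_2=(\widetilde{\Phi}/n)\,S$ is the product of the holonomic function $\widetilde{\Phi}$ by the algebraic (hence holonomic) function $S$, therefore holonomic, and $\pi=w_1+w_2+w/r$ is a sum of holonomic functions. For the algebraicity criterion I would argue by a dichotomy on $S$. If (\ref{CNS}) holds, then $S=0$, so $w_2=0$ and $\pi=w_1+w/r$ is algebraic. Conversely, if $S\neq0$ and $\pi$ were algebraic, then $w_2=\pi-w_1-w/r$ would be algebraic, whence $\widetilde{\Phi}=n\,w_2/S$ would be a quotient of algebraic functions and thus algebraic, contradicting the fact, recorded just before the statement, that $\widetilde{\Phi}$ is not algebraic in $x(\omega)$. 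This yields exactly the claimed equivalence.

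The main obstacle is the holonomy of $\widetilde{\Phi}$: one must pass from the transcendental, merely quasi-periodic, expression (\ref{def_Phi}) to a genuine linear differential equation over $\C(x)$. The crux here is the commensurability $n\omega_3=k\omega_2$, which places $\wp(\,\cdot\,;\omega_1,\omega_3)$ and $x(\omega)$ on a common torus and thereby makes $d\widetilde{\Phi}/dx$ algebraic; without it $\widetilde{\Phi}$ would fail even to be single-valued in $x$. Everything else amounts to invoking the closure of algebraic and holonomic functions under the operations used, which I would state once and reuse.
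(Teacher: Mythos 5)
Your proposal is correct, and its overall architecture coincides with the paper's: the decomposition $\pi=w_{1}+w_{2}+w/r$ of theorem~\ref{main_theorem_FIM}, the reduction of both claims to the single term $w_{2}=(\widetilde{\Phi}/n)S$, holonomy of $\widetilde{\Phi}$ obtained by showing its $x$-derivative is algebraic and invoking closure of holonomic functions under integration, and finally the dichotomy on $S$ using the non-algebraicity of $\widetilde{\Phi}$ in $x(\omega)$ (lemma 4.3.5 of \cite{FIM}). The one place where you genuinely diverge is inside the key holonomy step, which the paper isolates as lemma~\ref{Holonomic}: there the authors differentiate $\widetilde{\Phi}(\wp^{-1}(g))$ explicitly, using $[\wp^{-1}]'(u)=1/[4u^{3}-g_{2}u-g_{3}]^{1/2}$, and then conclude algebraicity of the resulting expression by citing lemma 4.3.3 of \cite{FIM} (both $\wp$ and $\wp_{1,3}$ are \emph{rational} in the Weierstrass function with periods $\omega_{1},k\omega_{2}$) together with the addition theorem; you instead argue structurally that, thanks to $n\omega_{3}=k\omega_{2}$, the functions $\wp_{1,3}(\,\cdot\,-\omega_{2}/2)$, $x(\omega)$ and $x'(\omega)$ all lie in the field of elliptic functions of a common lattice, which is a finite extension of $\C(x)$, so that $d\widetilde{\Phi}/dx=\widetilde{\Phi}'(\omega)/x'(\omega)$ is algebraic over $\C(x)$. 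The two justifications are equivalent in substance --- your function-field argument is exactly what lemma 4.3.3 of \cite{FIM} renders explicit --- so your route is shorter and avoids the addition theorem, while the paper's version produces the explicit rational relations; both hinge precisely on the commensurability $n\omega_{3}=k\omega_{2}$, as you correctly emphasize. One small imprecision to fix: after differentiating (\ref{def_Phi}), $\widetilde{\Phi}'$ is a constant plus a multiple of the \emph{shifted} function $\wp_{1,3}(\,\cdot\,-\omega_{2}/2)$, not of $\wp_{1,3}$ itself; this is harmless, since translation preserves the period lattice and hence membership in the common field of elliptic functions, but it should be said.
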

In fact, the proof is an immediate consequence of the next lemma.
\begin{lem} \label{Holonomic}
In the uniformization given in appendix \ref{recall-B}, let us write
\[
\wp(\omega) \egaldef g(x(\omega)),
\]
where g is a fractional linear transform. Then 
 the function $w_{2}(\wp^{-1}(g(x)))$ defined by~(\ref{def_w_2}) is holonomic in $x$, where 
$\wp^{-1}$ denotes the elliptic integral inverse function of $\wp$.
\end{lem}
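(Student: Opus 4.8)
The plan is to push everything up to the universal covering $\C_\omega$, where (by the discussion following Theorem~\ref{main_theorem_FIM}) the automorphism $\delta$ is simply the translation $\omega\mapsto\omega+\omega_3$, and then to pull the result back through the multivalued map $x\mapsto\omega(x)=\wp^{-1}(g(x))$. Throughout I would exploit the closure properties of the class of holonomic functions of one variable: it is a ring (closed under sums and products), it contains all algebraic functions, it is stable under algebraic substitution, and -- crucially -- it is stable under primitivation.

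First I would dispose of the rational factor in~(\ref{def_w_2}). Since $\psi$ and $f$ lie in the function field $\C_Q(x,y)$ of the genus-$1$ curve $Q(x,y)=0$, their lifts $\psi(\omega)$ and $f(\omega)$ are elliptic with periods $\omega_1,\omega_3$; because $\delta$ acts as translation by $\omega_3$, each $\psi_{\delta^k}(\omega)=\psi(\omega+k\omega_3)$ and $f_{\delta^i}(\omega)=f(\omega+i\omega_3)$ is again elliptic with the same periods. Hence
\[
S(\omega) \egaldef \sum_{k=0}^{n-1}\frac{\psi_{\delta^k}}{\prod_{i=1}^k f_{\delta^i}}
\]
is itself an elliptic function of $\omega$. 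Every elliptic function is a rational function of $\wp$ and $\wp'$; using $\wp(\omega)=g(x)$, which is rational in $x$, together with the Weierstrass relation $(\wp')^2=4\wp^3-g_2\wp-g_3$, which makes $\wp'$ \emph{algebraic} in $x$, I conclude that $S$, read through $\omega(x)$, is algebraic in $x$, hence holonomic.

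Next I would treat the transcendental factor $\widetilde{\Phi}$. By~(\ref{def_Phi}), up to the fixed shift $\omega_2/2$, the function $\widetilde{\Phi}$ is a $\C$-linear combination, with constant coefficients, of $\zeta(\omega)$ and $\omega$. The key computation is that $\omega(x)$ has an algebraic $x$-derivative: differentiating $\wp(\omega(x))=g(x)$ gives $d\omega/dx=g'(x)/\wp'(\omega)$, which is algebraic in $x$, and since a primitive of an algebraic function is holonomic, $\omega(x)$ is holonomic. Likewise, using $\zeta'=-\wp$,
\[
\frac{d}{dx}\,\zeta(\omega(x)) = -\wp(\omega)\,\frac{d\omega}{dx} = -\,g(x)\,\frac{g'(x)}{\wp'(\omega)} ,
\]
which is again algebraic in $x$, so $\zeta(\omega(x))$ is holonomic. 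The constant shift by $\omega_2/2$ is harmless: by the Weierstrass addition theorem $\wp(\omega-\omega_2/2)$ is a rational function of $\wp(\omega),\wp'(\omega)$ with constant coefficients, hence still algebraic in $x$, and the same derivative argument applies. Being a linear combination of the holonomic functions $\zeta(\omega(x))$ and $\omega(x)$, the pulled-back $\widetilde{\Phi}$ is holonomic in $x$.

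Finally, since $w_2=\widetilde{\Phi}\,S/n$ is the product of a holonomic function and an algebraic (hence holonomic) one, and the holonomic functions form a ring, $w_2(\wp^{-1}(g(x)))$ is holonomic in $x$, as claimed. I do not expect any single computation to be the obstacle; the delicate part is the clean bookkeeping of the closure properties -- in particular certifying that primitivation stays inside the holonomic class and that the multivalued elliptic substitution $\omega(x)$ interacts correctly with algebraicity. The one genuinely subtle point is controlling the constant shift $\omega_2/2$ occurring in~(\ref{def_Phi}), which the addition theorem resolves.
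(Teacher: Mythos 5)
Your proof has the same skeleton as the paper's (factor out the algebraic prefactor, reduce holonomy to algebraicity of the $x$-derivative via closure under primitivation, kill the shift $\omega_2/2$ with the addition theorem), but it contains a genuine gap at the central step, caused by conflating two different lattices. In (\ref{def_Phi}) the function $\zeta$ is $\zeta(\,\cdot\,;\omega_1,\omega_3)=\zeta_{1,3}$, the Weierstrass zeta function with quasi-periods $\omega_1,\omega_3$, whereas the uniformizing function $\wp$ of appendix \ref{recall-B} has periods $\omega_1,\omega_2$. Your key computation applies $\zeta'=-\wp$ and then substitutes $\wp(\omega(x))=g(x)$; this would be legitimate only if $\zeta$ were the zeta function of the uniformizing lattice $(\omega_1,\omega_2)$. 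In reality $\zeta_{1,3}'=-\wp_{1,3}$, the Weierstrass function with periods $\omega_1,\omega_3$, so the derivative you must control is $-\wp_{1,3}\big(\omega(x)-\omega_2/2\big)\,\omega'(x)$ (up to algebraic terms), and $\wp_{1,3}(\omega(x))$ is \emph{not} $g(x)$. Proving that it is nevertheless algebraic in $x$ is precisely the crux of the lemma, and your argument silently assumes it away.

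This is exactly where the paper must invoke the finiteness of the group: the relation $n\omega_3=k\omega_2$ (with $k$ prime to $n$) makes the lattices $(\omega_1,\omega_2)$ and $(\omega_1,\omega_3)$ commensurable; by lemma 4.3.3 of \cite{FIM}, both $\wp$ and $\wp_{1,3}$ are then rational functions of the Weierstrass function with periods $\omega_1,k\omega_2$, whence $\wp_{1,3}$ is algebraic over $\wp$, i.e.\ over $g(x)$, and only then does the addition theorem dispose of the shift $\omega_2/2$. Your proof never uses the finite-group hypothesis, which should itself be a warning sign: $\widetilde{\Phi}$ mixes $\omega_2$ (in its argument) with $\omega_1,\omega_3$ (in its definition), and without commensurability of $\omega_2$ and $\omega_3$ no reduction of $\wp_{1,3}$ to an algebraic function of $\wp$ is available. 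A second, more minor slip: the lifts of $\psi$ and $f$ to $\C_{\omega}$ are elliptic with periods $\omega_1,\omega_2$, not $\omega_1,\omega_3$; your conclusion that the prefactor is algebraic in $x$ survives, more simply because it belongs to $\C_Q(x,y)$ and $y$ is algebraic over $x$ on the curve $Q(x,y)=0$.
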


\begin{proof}
  Since $w_{2}$ is the product of $\widetilde{\Phi}$ by an algebraic function, it suffices to prove that $\widetilde{\Phi}(\wp^{-1}(g))$ is holonomic.  It is known that the class of holonomic functions is closed under indefinite integration (see e.g.~\cite{FLAJ}), thus it is enough to prove that $[\widetilde{\Phi}(\wp^{-1}(g))]'$ is holonomic. In fact, we are going to show that $[\widetilde{\Phi}(\wp^{-1}(g))]'$ is algebraic
  -- we recall that any algebraic function is holonomic. Indeed, use on the one hand that the derivative of $-\zeta_{1,3}$ is $\wp_{1,3}$, the Weierstrass elliptic function with periods $\omega_{1},\omega_{3}$, and on the other hand that 
 \[
    [\wp^{-1}]'(u)=1/[4u^3-g_{2}u-g_{3}]^{1/2},
\]
 $g_2$ and $g_3$ being the so-called invariants of $\wp$ --  for these two properties see e.g.~\cite{HUR}. In this way we get 
     \begin{equation*}
          -2i \pi [\widetilde{\Phi}(\wp^{-1}(g))]'=\frac{g'}{
          [4g^{3}-g_{2} g-g_{3}]^{1/2}}\big[\omega_{1}\wp_{1,3}(
          \wp^{-1}(g)-\omega_{2}/2)+2\zeta_{1,3}(\omega_{1}/2)\big].
     \end{equation*}
The function $g$ being rational, $g'/[4g^{3}-g_{2} g-g_{3}]^{1/2}$ is algebraic and
in order to conclude it is enough to prove that $\wp_{1,3}(\wp^{-1}(g)-\omega_{2}
/2)$ is algebraic. Since $n\omega_{3}=k\omega_{2}$, it is shown in \cite{FIM}, lemma~4.3.3 that both $\wp$ and $\wp_{1,3}$ are rational functions of the Weierstrass elliptic function with periods $\omega_{1},k\omega_{2}$. Then it is immediate that $\wp_{1,3}$ is also  algebraic function of $\wp$. Moreover, by the well-known addition theorem for the Weiertrass elliptic functions (see e.g.~\cite{HUR}), $\wp$ is a rational function of 
$\wp(\,\cdot\,+\omega_{2}/2)$, so that $\wp_{1,3}$ is also an algebraic function 
of $\wp(\,\cdot\,+\omega_{2}/2)$.  In particular $\wp_{1,3}(\wp^{-1}(g)-\omega_{2}/2)$ is algebraic and 
lemma~\ref{Holonomic} is proved.
\end{proof}

Of course similar results can be written for $\widetilde{\pi}$. In particular, it is easy to check that if  $N(f)=1$ then also $\widetilde{N}(f)=1$, where $\widetilde{N}(f)=1$ is defined in terms of $\widetilde{\delta}\egaldef\xi\eta$. Thus we are in a position to state the following general result.
\begin{theo} \label{Bivariate}
Assume the group is finite of order $2n$ and $N(f)=1$. Then the bivariate series 
$\pi(x,y)$ solution of~(\ref{A1-FE}) is holonomic. 
Furthermore $\pi(x,y)$ is algebraic if and only if~(\ref{CNS}) and
($\widetilde{\ref{CNS}}$) hold on $Q(x,y)=0$,  ($\widetilde{\ref{CNS}}$) denoting
 the condition for $\widetilde{\pi}(y)$ to be algebraic, obtained by symmetry from~(\ref{CNS}).
\bbox
\end{theo}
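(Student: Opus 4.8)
The plan is to read off the bivariate function $\pi(x,y)$ directly from the functional equation~(\ref{A1-FE}), which expresses it \emph{rationally} in terms of the two univariate unknowns $\pi(x)$ and $\widetilde{\pi}(y)$ whose nature has already been settled. Indeed, solving~(\ref{A1-FE}) for $\pi(x,y)$ gives
\begin{equation*}
\pi(x,y) = \frac{q(x,y)\,\pi(x) + \widetilde{q}(x,y)\,\widetilde{\pi}(y) + q_0(x,y)}{Q(x,y)},
\end{equation*}
where $q,\widetilde{q},q_0$ and $Q$ all lie in $\C(x,y)$. Thus everything reduces to the behaviour of $\pi(x)$ and $\widetilde{\pi}(y)$, together with the observation (recorded just before the statement) that $N(f)=1$ forces $\widetilde{N}(f)=1$, so that corollary~\ref{Coro1} applies simultaneously to $\pi(x)$ and, by symmetry, to $\widetilde{\pi}(y)$.

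First I would settle holonomy. By corollary~\ref{Coro1} the series $\pi(x)$ is holonomic in $x$ and, by the symmetric statement, $\widetilde{\pi}(y)$ is holonomic in $y$; regarded as functions of the pair $(x,y)$ each is therefore holonomic in $(x,y)$. Since the class of holonomic functions is stable under multiplication by rational functions, under addition, and under division by a nonzero rational function, the displayed expression shows at once that $\pi(x,y)$ is holonomic.

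For the algebraic case, the \emph{if} direction is equally immediate: algebraic functions form a field containing $\C(x,y)$, so if both $\pi(x)$ and $\widetilde{\pi}(y)$ are algebraic --- that is, if~(\ref{CNS}) and~($\widetilde{\ref{CNS}}$) hold, by corollary~\ref{Coro1} and its symmetric counterpart --- then the right-hand side above is algebraic and hence so is $\pi(x,y)$. The substance lies in the converse. Assuming $\pi(x,y)$ is algebraic, the quantity
\begin{equation*}
q(x,y)\,\pi(x) + \widetilde{q}(x,y)\,\widetilde{\pi}(y) = Q(x,y)\,\pi(x,y) - q_0(x,y)
\end{equation*}
is algebraic over $\C(x,y)$. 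The key step --- and the point I expect to be the only delicate one --- is a \emph{separation of variables}: specialising the second coordinate to a generic value $y=y_0$ yields an algebraic function of $x$ on the left, in which $\widetilde{\pi}(y_0)$ is a mere constant while $q(\cdot,y_0)$ and $\widetilde{q}(\cdot,y_0)$ are rational; dividing out by $q(\cdot,y_0)$ then exhibits $\pi(x)$ itself as algebraic. The one thing to justify carefully is that the restriction of an algebraic bivariate function to a generic horizontal line remains algebraic: this follows from a defining relation $P(x,y,T)=0$, upon noting that for all but finitely many $y_0$ the specialised polynomial $P(x,y_0,T)$ stays nontrivial in $T$, so that $P(x,y_0,\pi(x,y_0))=0$ is a genuine algebraic equation over $\C(x)$. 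A symmetric specialisation $x=x_0$ produces the algebraicity of $\widetilde{\pi}(y)$. Invoking corollary~\ref{Coro1} once in each variable then converts these into the two conditions~(\ref{CNS}) and~($\widetilde{\ref{CNS}}$), completing the equivalence.
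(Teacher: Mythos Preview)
Your argument is correct and is precisely the route the paper has in mind: the theorem is stated there without proof (only the closing box), being regarded as an immediate consequence of corollary~\ref{Coro1} applied to each of $\pi(x)$ and $\widetilde{\pi}(y)$ together with the rational expression of $\pi(x,y)$ furnished by~(\ref{A1-FE}). You have simply spelled out the details the authors suppress, including the specialization argument for the converse of the algebraicity claim.
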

\section{Nature of the counting generating functions} \label{Nature}
We return now to equation (\ref{FE}) and make use of the machinery of section \ref{Recall} with 
\begin{equation}\label{eq-fpsi}
f= \frac{c\widetilde{c}_\eta}{\widetilde{c}c_\eta} , \quad
 \psi = \frac{c_0\widetilde{c}_\eta}{c_\eta \widetilde{c}} - \frac{(c_0)_\eta}{c_\eta}.
\end{equation}

Beforehand it is important to note that the variable $z$ plays here the role of a 
\emph{parameter}. Moreover, the CGF (\ref{CGF}) is well defined and analytic  in a region containing the domain 
\[
\{(x,y,z)\in \mathbb{C}^{3} : |x|\le1, |y|\le1, |z|<1/|\Sc|\}.
\]
The analysis in \cite{FIM} aimed at finding the stationay measure of the random walk, in which case  $z=1$. But it is possible to check \emph{mutatis mutandis} that most of the topological results remains valid at least for  $\Re(z)\ge0$ [see for instance \cite{FIMI} or \cite{Ra} for  time dependent problems].

\begin{lem}\label{NORM}
For  any finite $n$ and $f$ given by (\ref{eq-fpsi}), we have $N(f)=1$.
\end{lem}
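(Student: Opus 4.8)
The plan is to exploit the fact that in the present setting $c$ and $\widetilde{c}$ are functions of a \emph{single} variable. First I would record two elementary invariances: since $c=c(x)$ depends on $x$ only while $\xi$ leaves the first coordinate fixed, one has $c_\xi=c$; symmetrically, $\widetilde{c}=\widetilde{c}(y)$ depends on $y$ only and $\eta$ leaves the second coordinate fixed, so $\widetilde{c}_\eta=\widetilde{c}$. Substituting these two identities into the expression $f=c\widetilde{c}_\eta/(\widetilde{c}\,c_\eta)$ from~(\ref{eq-fpsi}) makes the factors involving $\widetilde{c}$ cancel, collapsing $f$ to the much simpler quotient $f=c/c_\eta$. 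The whole computation of $N(f)$ is thereby reduced to comparing two norms of the single function $c$.

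Next I would expand the norm directly:
\[
N(f)=\prod_{i=0}^{n-1}f_{\delta^i}=\frac{\prod_{i=0}^{n-1}c_{\delta^i}}{\prod_{i=0}^{n-1}c_{\eta\delta^i}}=\frac{N(c)}{\prod_{i=0}^{n-1}c_{\eta\delta^i}},
\]
so that everything hinges on showing that the denominator is again equal to $N(c)$. To handle it I would use the dihedral relations in $\Hc=\langle\xi,\eta\rangle$. From $\xi^2=\eta^2=\text{id}$ and $\delta=\eta\xi$ one gets $\eta=\delta\xi$ and $\xi\delta^i=\delta^{-i}\xi$, hence $\eta\delta^i=\delta^{1-i}\xi=\xi\delta^{i-1}$. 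Applying this to $c$ and invoking $c_\xi=c$ gives $c_{\eta\delta^i}=(c_\xi)_{\delta^{i-1}}=c_{\delta^{i-1}}$. Since $\delta^n=\text{id}$, the exponent $i-1$ runs over a complete set of residues modulo $n$ as $i$ does, so $\prod_{i=0}^{n-1}c_{\eta\delta^i}=\prod_{i=0}^{n-1}c_{\delta^{i-1}}=N(c)$, and therefore $N(f)=N(c)/N(c)=1$.

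Conceptually the argument merely records that $N(c)$ is invariant under $\delta$ by the very definition of the norm and under $\xi$ because $c_\xi=c$; as these two automorphisms generate $\Hc$, the quantity $N(c)$ is fixed by the whole group, in particular $N(c)_\eta=N(c)$, which is the one identity the cancellation needs. I expect the only delicate point to be the bookkeeping: fixing once and for all the convention $(h_\alpha)_\beta=h_{\alpha\beta}$ for the action of $\Hc$ on $\C_Q(x,y)$ and applying the reflection/rotation relations in the right order, so that the reindexing $i\mapsto i-1$ modulo $n$ is legitimate and $\xi$ ends up innermost where $c_\xi=c$ can be used. Everything else is formal, and the symmetric statement $\widetilde{N}(f)=1$ would follow by interchanging the roles of $(x,\xi,c)$ and $(y,\eta,\widetilde{c})$.
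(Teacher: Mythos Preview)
Your argument is correct and follows the same approach as the paper: both use $c_\xi=c$ and $\widetilde{c}_\eta=\widetilde{c}$ to simplify $f$ to $c/c_\eta$, and then exploit the dihedral relation together with $c_\xi=c$ to make the norm telescope. The paper compresses your reindexing step by first rewriting $f=c/c_\eta=c_\delta/c$ (via $\eta\delta=\xi$ and $c_\xi=c$), so that $N(f)=\prod_{i}c_{\delta^{i+1}}/c_{\delta^{i}}=1$ is immediate; your product manipulation achieves exactly the same cancellation.
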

\begin{proof}
Since $c$ [resp.\ $\widetilde{c}$\,] is a function solely of $x$ [resp.\ $y$], we have 
$c=c_\xi$ and $\widetilde{c}= \widetilde{c}_\eta$. Hence
\[f= \frac{c}{c_\eta} = \frac{c_\delta}{c_{\delta\eta}}=\frac{c_\delta}{c},
\] 
which yields immediately $N(f)=1$ from the definition (\ref{eq-norm}).
\end{proof}

In order to obtain theorems~\ref{22_BMM} and~\ref{1_BK} for the bivariate
function $(x,y)\mapsto F(x,y,z)$, it is thus enough, by theorem \ref{Bivariate}
and lemma \ref{NORM}, to prove the following final proposition.

\begin{prop}
For the 4 walks in figure \ref{Fig_alg}, (\ref{CNS}) and ($\widetilde{\ref{CNS}}$) hold
in $\C^{2}$. As for the 19 other walks (3 in figure \ref{Fig_hol}
and the 16  ones with a vertically symmetric step set $\mathcal{S}$),
(\ref{CNS}) or ($\widetilde{\ref{CNS}}$) do not hold on $\big\{(x,y)\in \C^{2}:
K(x,y)=xy\big[\sum_{(i,j)\in\Sc}x^{i}y^{j}-1/z\big]=0\big\}$, for any $z$.
\end{prop}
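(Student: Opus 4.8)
The plan is to convert Corollary~\ref{Coro1} into an explicit finite computation, carried out separately for the three possible group orders $2n\in\{4,6,8\}$. First I would record the two simplifications that make the left-hand side of~(\ref{CNS}) tractable. By Lemma~\ref{NORM} we have $f=c_\delta/c$, so the products in the denominators telescope: for $1\le k\le n-1$,
\begin{equation*}
\prod_{i=1}^{k} f_{\delta^{i}}=\prod_{i=1}^{k}\frac{c_{\delta^{i+1}}}{c_{\delta^{i}}}=\frac{c_{\delta^{k+1}}}{c_\delta},
\end{equation*}
the empty product ($k=0$) being $1$. Since $\widetilde{c}$ depends only on $y$, we have $\widetilde{c}_\eta=\widetilde{c}$, so from~(\ref{eq-fpsi}) and $c_0=-\delta F(0,0,z)-xy/z$ the constant term drops out and
\begin{equation*}
\psi=\frac{c_0-(c_0)_\eta}{c_\eta}=-\frac{1}{z}\,\frac{xy-(xy)_\eta}{c_\eta}.
\end{equation*}
Using $\delta=\eta\xi$ and $c_\xi=c$ one checks that $c_{\delta^{k+1}}=(c_\eta)_{\delta^{k}}=c_{\delta^{k}\eta}$; hence, after factoring out the generically non-zero quantity $c_\delta$, condition~(\ref{CNS}) is equivalent to
\begin{equation*}
\sum_{k=0}^{n-1}\frac{(xy)_{\delta^{k}}-(xy)_{\delta^{k}\eta}}{c_{\delta^{k+1}}^{\,2}}=0\qquad\text{on } Q(x,y)=0.
\end{equation*}

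As $k$ runs over $0,\dots,n-1$ the elements $\delta^{k}$ and $\delta^{k}\eta$ exhaust the dihedral group $\Hc$, so the numerators above reproduce the \emph{signed orbit sum} of the monomial $xy$, each rotation entering with a $+$ sign and each reflection with a $-$ sign (now weighted by the rational factors $c_{\delta^{k+1}}^{-2}$). This is the same signed orbit sum whose vanishing governs the algebraicity dichotomy in~\cite{BMM}, which gives a useful consistency check. To evaluate the expression I would clear denominators against the norm $\prod_{j}c_{\delta^{j}}^{2}$ and use the two defining properties of the automorphisms on $Q=0$: $\xi$ fixes $x$ and interchanges the two roots $Y_0(x),Y_1(x)$ of the kernel viewed as a quadratic in $y$, whose product is $c(x)/a(x)$ with $a(x)=x\sum_{(i,1)\in\Sc}x^{i}$; and symmetrically $\eta$ fixes $y$ and interchanges the two $x$-roots, whose product is $\widetilde{c}(y)/\widetilde{a}(y)$. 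These relations express every orbit point $(x,y)_{\theta}$ rationally and reduce the condition to a polynomial identity to be tested modulo $K(x,y)$.

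I would then proceed by cases on $n$, substituting the explicit step sets of the $23$ walks. For $n=2$ the sum has two terms and collapses, using $c_{\delta^{2}}=c$, to $\psi/c_\delta+\psi_\delta/c$; here the $16$ vertically symmetric walks can be handled uniformly, since vertical symmetry forces $\sum_{(-1,j)\in\Sc}y^{j}=\sum_{(1,j)\in\Sc}y^{j}$ and hence $\eta(x,y)=(1/x,y)$, after which the expression is an explicit non-zero rational function on the curve. For $n=3$ (the five walks on the left of figures~\ref{Fig_hol} and~\ref{Fig_alg}) and $n=4$ (the two order-$8$ walks, including Gessel's) the orbit has six, respectively eight, points and the weighted sum is computed directly. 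For the $4$ walks of figure~\ref{Fig_alg} the computation returns the identically zero rational function of $(x,y)$, so that both~(\ref{CNS}) and, by the mirror computation with $\widetilde{\delta}=\xi\eta$, also ($\widetilde{\ref{CNS}}$) hold in all of $\C^{2}$; Theorem~\ref{Bivariate} then yields algebraicity. For the remaining $19$ walks at least one of the two sums is a non-zero element of the function field on $K(x,y)=0$.

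The main obstacle is precisely this last assertion: non-vanishing must be shown \emph{on the curve and for every admissible value of the parameter $z$}, so specializing $z$ to a generic value is not enough. The safe route is to regard the reduced sum as an element of $\C(z)(x,y)$ taken modulo $K$, and to certify that it is non-zero either by evaluating at a convenient rational point of the curve (where it becomes an explicit non-zero fraction in $z$) or by isolating a single surviving monomial of the cleared-denominator polynomial, uniformly in $z$. The order-$8$ Gessel case, with its eight-point orbit and the heaviest bookkeeping, together with the need to repeat everything for ($\widetilde{\ref{CNS}}$), is where the verification is most laborious; by contrast the four algebraic (vanishing) cases are comparatively clean, since there one checks an identity in two free variables rather than a statement restricted to the kernel curve.
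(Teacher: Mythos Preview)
Your plan is correct and aligns with the paper's own proof: both reduce (\ref{CNS}) via the telescoping identity $\prod_{i=1}^{k}f_{\delta^{i}}=c_{\delta^{k+1}}/c_{\delta}$ (used implicitly in the paper) and then proceed case by case on $n\in\{2,3,4\}$, recognising the unweighted case $c=1$ as the signed orbit sum of $xy$ from~\cite{BMM}.

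The one point worth flagging is your ``main obstacle'', non-vanishing on the curve for \emph{every} $z$. The paper dispatches this not by specialisation or by isolating a monomial, but by observing that in each non-algebraic case the sum factors completely into elementary pieces. For the $16$ vertically symmetric walks (where $\eta(x,y)=(1/x,y)$ and $c/c_\eta=x^{2}$ for each of the three possible $c$'s) one obtains the closed form
\[
\sum_{k=0}^{1}\frac{\psi_{\delta^{k}}}{\prod_{i=1}^{k}f_{\delta^{i}}}
= -\frac{x^{2}}{zc}\,(x-x_{\eta})(y-y_{\xi}),
\]
and the three walks of figure~\ref{Fig_hol} yield analogous products such as $(x-y^{2})(1-xy)(y-x^{2})/zy^{3}t$ and $(y-1)(x^{2}-1)(x^{2}-y)(x^{2}-y^{2})/xy^{4}z$. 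These factored forms make the non-vanishing on $K(x,y)=0$ immediate and uniform in $z$, since the factors $x-x_{\eta}$, $y-y_{\xi}$, etc., vanish only at isolated branch points of the covering, not identically on the curve. This is cleaner than the rational-point or monomial-extraction certificates you propose, and it is the only place where the paper's argument adds something beyond your outline.
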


\begin{proof} The proof proceeds in three stages.
\paragraph{(i)} \emph{The walks in figure \ref{Fig_alg}}.

Let us check (\ref{CNS}) for the 4 walks in figure \ref{Fig_alg}.
We begin with the popular Gessel's walk, i.e.\ the rightest one in figure \ref{Fig_alg}. Here $c=1$ in~(\ref{FE}), so that by~(\ref{eq-fpsi}) 
$f=1$ and $\psi=[(xy)_{\eta }-xy]/z$. Moreover,  the order of the group $W$ was shown in \cite{BMM} to be equal to 8. According to (\ref{HW}), we know that $\textit{Order}(\Hc)\le 8$. On the other hand, by lemmas 4.1.1 and 4.1.2 of \cite{FIM}, a simple algebra (details are omitted) can show that the order of $\Hc$ cannot be equal to 4 neither to 6, which entails $\textit{Order}(\Hc)=8$, hence $n=4$.

Then
     \begin{equation}
     \label{c=1}
          \sum_{k=0}^{n-1} \frac{\psi_{\delta^k}}{\prod_{i=1}^{k}f_{\delta^i}}=
          \sum_{k=0}^{3}\psi_{\delta^{k}}=\frac{1}{z}\sum_{k=0}^{3}\big[(xy)_{
          \eta\delta^{k}}-(xy)_{\delta^{k}}\big]=-\frac{1}{z}\sum_{\rho\in \Hc}
          \text{sign}(\rho)(xy)_{\rho}.
     \end{equation}

The last quantity in (\ref{c=1}) can be understood as $(-1/z)$ times the orbit sum 
of $xy$ through the group $\Hc$, and it was remarked in \cite{BMM} that this 
orbit sum equals zero for Gessel's walk, which is equivalent to~(\ref{CNS}) in $\mathbb{C}^{2}$.

For the second walk in figure~\ref{Fig_alg}, $\textit{Order}(W)=\textit{Order}(\Hc)= 6$  ($n=3$) and still $c=1$, so that the above argument applies.

For the walk on the left in figure~\ref{Fig_alg}, we have
 \[
 \xi=(x,1/(xy)), \quad \eta=(1/(xy),y),
 \]
hence $\xi\eta=(1/(xy),x)$. Moreover $c=x$, thus  by using
(\ref{eq-fpsi}) we get $f=x^{2}y$ and $\psi=[y-(xy)^{2}]/z$. 
Next one easily computes $f_{\delta}=1/(xy^{2})$, $f_{\delta^{2}}=y/x$,
$\psi_{\delta}=[x-1/y^{2}]/z$ and $\psi_{\delta^{2}}=[1/(xy)-1/x^{2}]/z$. 
Finally an immediate calculation shows that~(\ref{CNS}) holds in $\C^{2}$.

As for the last walk in figure \ref{Fig_alg}, we could check $n=3$ and again
 (\ref{CNS}) holds in~$\C^{2}$.

Concerning equation  ($\widetilde{\ref{CNS}}$), only Gessel's 
walk needs some special care, since the 3 others give rise to symmetric conditions.
 In fact, from a direct calculation along the same lines as above, it is not difficult to see that 
($\widetilde{\ref{CNS}}$) holds in  $\C^{2}$.
\paragraph{(ii)} \emph{The 16 walks with a vertical symmetry.} 
\begin{figure}[!ht]
\begin{center}
\begin{picture}(340.00,65.00)
\includegraphics{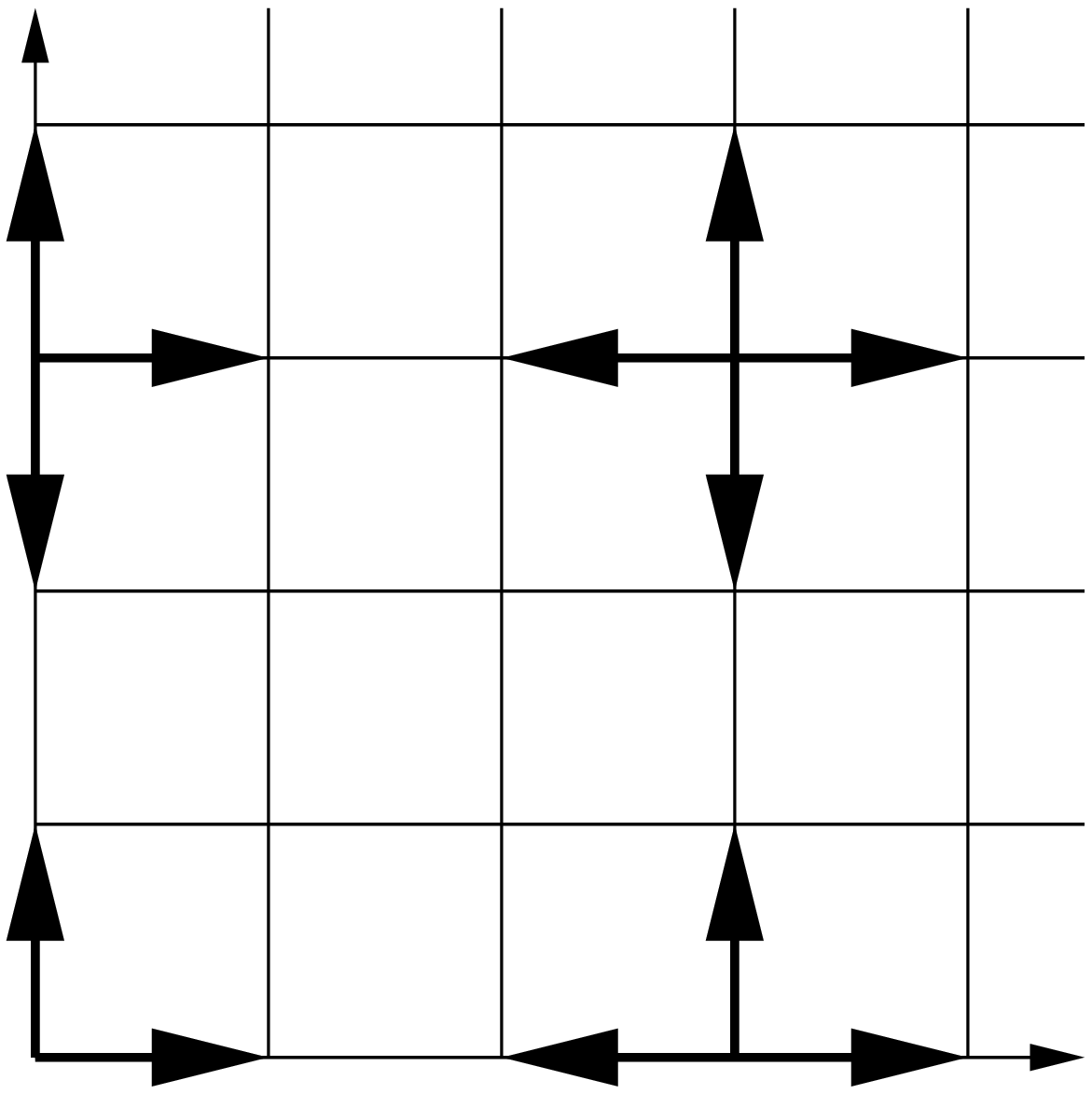}
\hspace{45mm}
\includegraphics{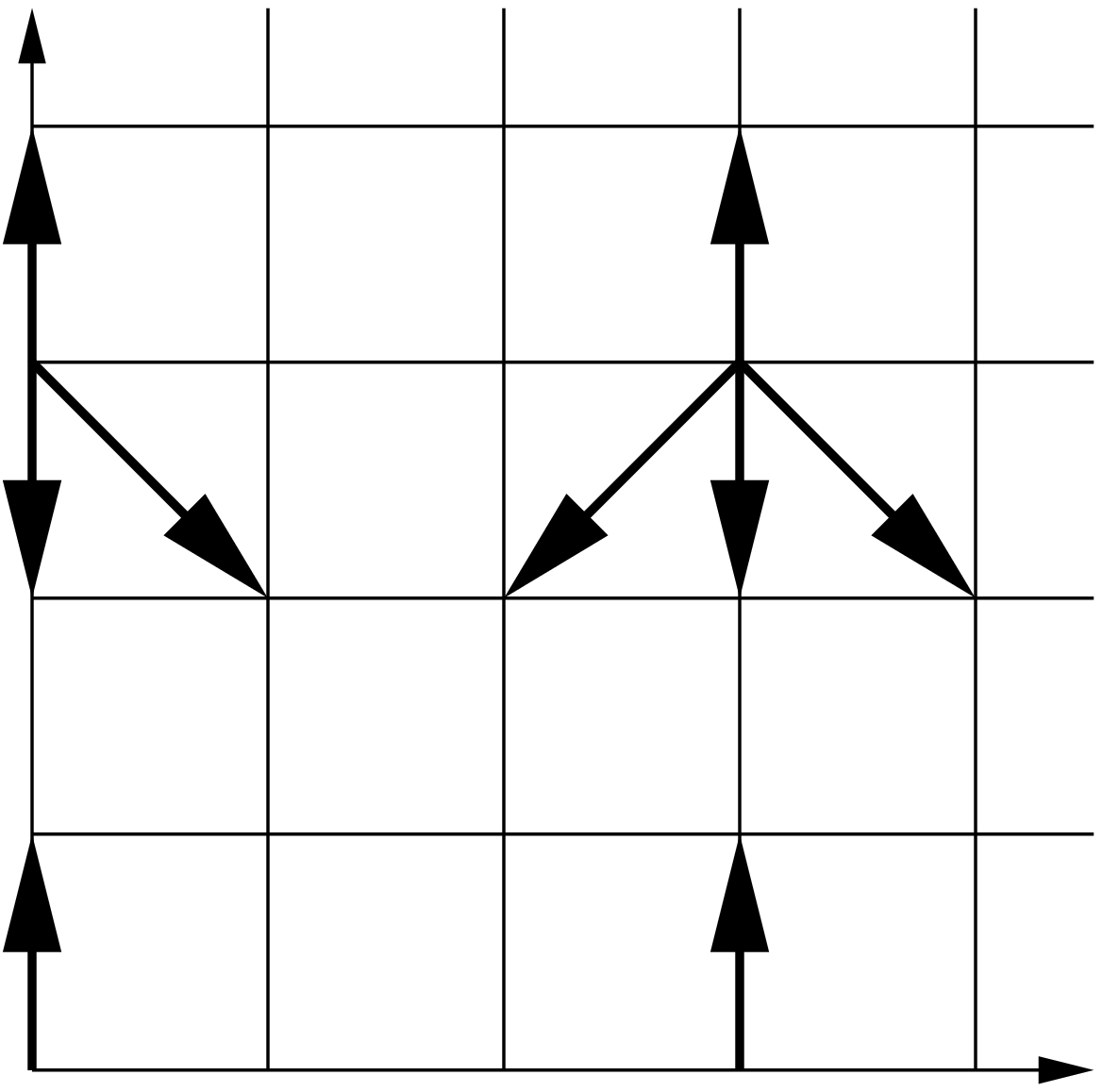}
\hspace{45mm}
\includegraphics{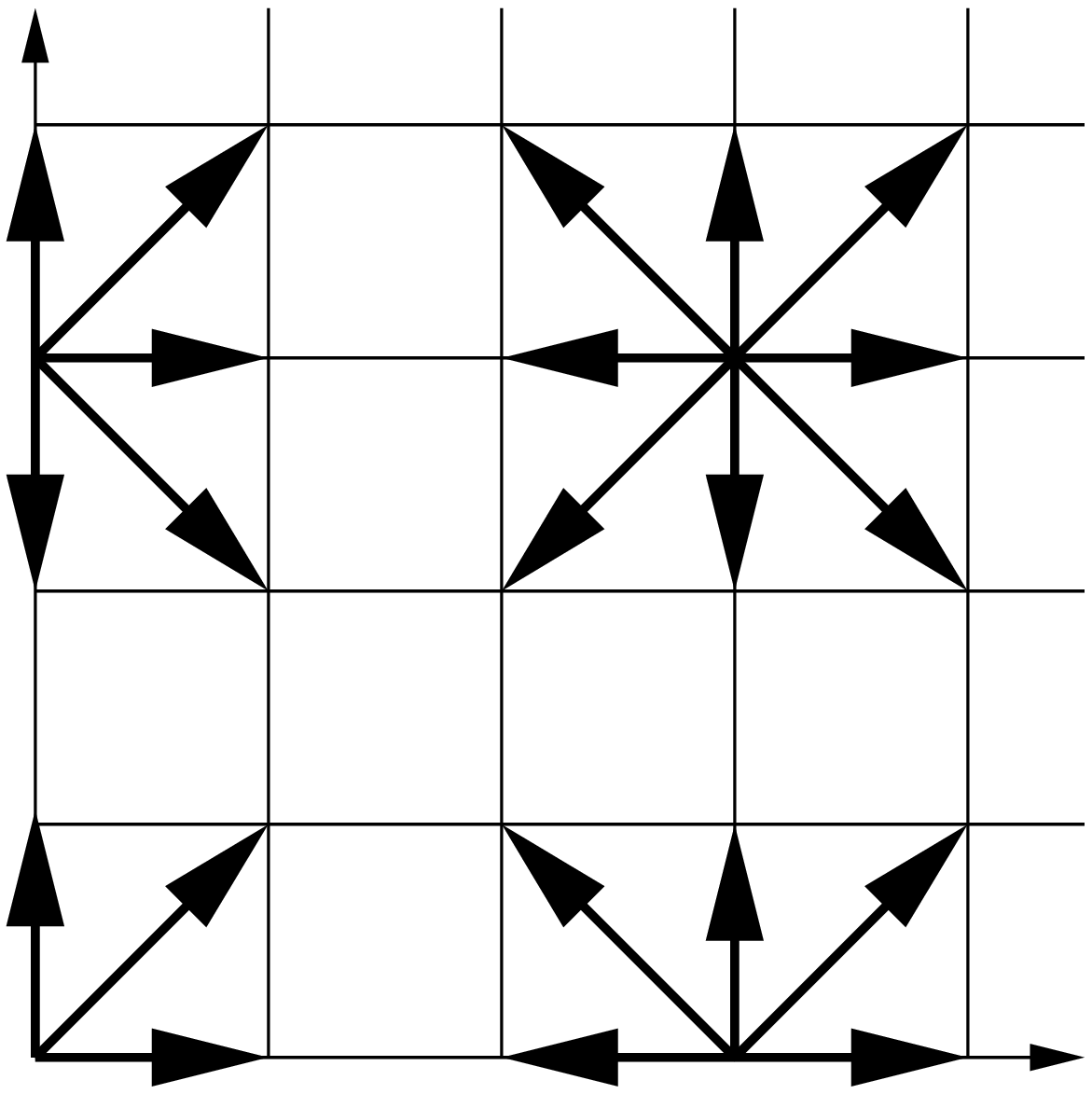}
\end{picture}
\end{center}
\vspace{-3mm}
\caption{Three examples of walks having a group of order 4, with symmetrical \emph{West} and \emph{East} jumps}
\label{Fig_four}
\end{figure} 

We will show the equality
     \begin{equation}
     \label{group_four_orbit}
          \sum_{k=0}^{n-1} \frac{\psi_{\delta^k}}{\prod_{i=1}^kf_{\delta^i}} =-\frac{x^{2}}{z c}
          (x-x_{\eta})(y-y_{\xi}),
     \end{equation} 
which clearly implies that (\ref{CNS}) will  not be satisfied on 
$\left\{(x,y)\in \C^{2}:K(x,y)=0\right\}$, for any $z$.

For these 16 walks we have $\textit{Order}(W)=4=\textit{Order}(\Hc)$, that is $n=2$.
Moreover, the vertical symmetry of the jump set $\mathcal{S}$ has two consequences~: 
first, necessarily  $\eta=(1/x,y)$; second, the coefficient $c$ in~(\ref{FE}) must be equal to $x$, $1+x^{2}$ or $1+x+x^{2}$. In addition, for any of these three possible values of $c$, we have $f=c/c_{\eta}=x^{2}$. Hence finally
     \begin{equation*}
          \sum_{k=0}^{n-1} \frac{\psi_{\delta^k}}{\prod_{i=1}^kf_{\delta^i}} =
          \psi+\frac{\psi_{\delta}}{f_{\delta}}=\frac{(xy)_{\eta}-xy}{zc_{\eta}}+
          \frac{(xy)_{\xi}-(xy)_{\delta}}{zc_{\xi}f_{\delta}}=\frac{y(x_{\eta}-x)}
          {zc_{\eta}}+\frac{y_{\xi}(x-x_{\eta})}{z c (x^{2})_{\delta}}.
     \end{equation*}
Factorizing by $x^{2}/c$ and using again $c/c_{\eta}=x^{2}$, we obtain
(\ref{group_four_orbit}).
\paragraph{(iii)}\emph{The 3  walks in figure \ref{Fig_hol}.} 

We are going to show that (\ref{CNS})
does not hold on $K(x,y)=0$, for any $z$.

By similar calculations as in (\ref{group_four_orbit}), we obtain that for the $2$ first walks in
figure \ref{Fig_hol} (which are such that $n=3$), 
     \begin{equation*}
          \sum_{k=0}^{n-1} \frac{\psi_{\delta^k}}{\prod_{i=1}^kf_{\delta^i}} =
          \frac{(x-y^{2})(1-xy)(y-x^{2})}{z y^{3}t},
     \end{equation*}
with $t=y$ for the walk at the left and $t=x+y$ for the walk in the middle, so that,
clearly, (\ref{CNS}) does not hold on $\left\{(x,y)\in \C^{2}:K(x,y)=0\right\}$, for any $z$.

\medskip

At last, for the walk at the right in figure~\ref{Fig_hol}, we 
have $n=4$, and we get 
     \begin{equation*}
          \sum_{k=0}^{n-1} \frac{\psi_{\delta^k}}{\prod_{i=1}^kf_{\delta^i}} =
          \frac{(y-1)(x^{2}-1)(x^{2}-y)(x^{2}-y^{2})}{xy^{4}z},
     \end{equation*}
which is obviously not identically to zero in the set $\left\{(x,y)\in \C^{2}:K(x,y)=0\right\}$, for any $z$.

\end{proof}
\appendix
\section{The basic functional equations (see \cite{FIM}, chapters 2 and 5)}\label{recall-A}
In a probabilistic framework, one considers a piecewise homogeneous random walk with sample paths in $\Zb_+^2$. In the interior of $\Zb_+^2$,  the size of the jumps is $1$. On the other hand, no assumption is made about the boundedness of the upward jumps on the
axes, neither at $(0,0)$. In addition, the downward jumps on the $x$ [resp.\ $y$] axis are bounded by $L$ [resp.\ $M$], where $L$ and $M$ are arbitrary finite integers. Then the invariant measure $\{\pi_{i,j}, i,j \ge 0\}$  does satisfy the fundamental functional equation 
\begin{equation} \label{A1-FE}
 Q(x,y) \pi(x,y) = q(x,y) \pi(x) + \widetilde{q}(x,y)
\widetilde{\pi}(y) + q_0(x,y), 
\end{equation}
with
\begin{equation*}  
\label{A2-FE}
\begin{cases}
\pi(x,y) = \DD \sum_{i,j \geq 1} \pi_{ij} x^{i-1} y^{j-1}, \\[0.5cm]  
\pi(x) = \DD \sum_{i\geq 1} \pi_{i0} x^{i-1}, \quad \widetilde{\pi}(y) = \sum_{j\geq 1} \pi_{0j} y^{j-1}, \\[0.5cm] 
Q(x,y) = \DD xy \Bigg[ 1 - \sum_{i,j\in\Sc} p_{ij} x^i y^j \Bigg], \quad  \sum_{i,j\in\Sc} p_{ij} =1, \\[0.5cm] 
q(x,y) = \DD  x \Bigg[\sum_{i \geq - 1, j \geq 0}
p'_{ij} x^i y^j - 1 \Bigg], \\[0.5cm]
\widetilde{q}(x,y) = \DD y \Bigg[\sum_{i \geq 0, j \geq -1} p''_{ij} x^i y^j - 1 \Bigg],\\[0.5cm]
 q_0(x,y) = \DD \pi_{00} \big[P_{00}(x,y)-1\big]. \end{cases}
\end{equation*}
In equation (\ref{A1-FE}), the functions $\pi(x,y), \pi(x), \widetilde{\pi}(y)$ are unknown and sought to be analytic in the region 
$\{(x,y)\in \mathbb{C}^{2} : |x|<1,|y|<1\}$ and continuous on their respective boundaries, while  the $q, \widetilde{q}, q_0$ (up to a constant) are given probability generating functions supposed to have suitable analytic continuations (as a rule, they are polynomials when the jumps are bounded). The function  $Q(x,y)$ is often referred to as the \emph{kernel} of (\ref{A1-FE}).

\section{About the uniformization of \protect $Q(x,y)=0$ (see \cite{FIM}, chapter 3)} 
\label{recall-B}
When the associated Riemann surface is of genus $1$, the algebraic curve $Q(x,y)=0$ admits a uniformization given in terms of the Weierstrass $\wp$ function with periods $\omega_1,\omega_2$ and its derivatives. Indeed, setting 
\begin{eqnarray*}
Q(x,y) & = & a(x) y^2 + b(x) y + c(x),\\
D(x) & = & b^2(x) -4a(x)c(x)=d_{4}x^{4}+d_{3}x^{3}+d_{2}x^{2}+d_{1}x+d_{0},\\
u & = & 2a(x)y+b(x),
\end{eqnarray*}
the following formulae hold (see \cite{FIM}, lemma 3.3.1).
 \begin{enumerate}
\item If $d_4 \neq 0$ ($4$ finite branch points $x_1,\ldots x_4$) then $D'(x_4)>0$ and
\begin{equation*}
\begin{cases}
x(\omega) = x_4 + \dfrac{D'(x_4)}{\wp(\omega) - \frac{1}{6} D''(x_4)},
\\[3ex] u(\omega) = \dfrac{D'(x_4) \wp'(\omega)}{2\left(\wp(\omega) -
\frac{1}{6} D''(x_4)\right)^2}. 
\end{cases}
\end{equation*}
\item If $d_4 = 0$ ($3$ finite branch points $x_1,x_2,x_3$ and $x_4=\infty$) then
\begin{equation*}
\begin{cases}
x(\omega) = \dfrac{\wp(\omega) - \dfrac{d_2}{3}}{d_3}, \\[2.5ex]
u(\omega) = - \dfrac{\wp'(\omega)}{2d_3}. 
\end{cases}
\end{equation*}
\end{enumerate}

\bibliographystyle{acm}

\end{document}